\newtheorem{theorem}{Theorem}
\newtheorem*{theorem*}{Theorem}
\newtheorem{proposition}{Proposition}
\newtheorem{corollary}{Corollary}
\newtheorem{remark}{Remark}
\newcounter{tdfn}
\newenvironment{dfn}
{\vspace{0.15cm}{\bf Definition \arabic{tdfn}.}} {\par
\addtocounter{tdfn}{1}}
\newcounter{trk}
\def\:{\colon}
\def\R{{\mathbb R}}
\def\Z{{\mathbb Z}}
\def\0{{\mathbf 0}}
\def\1{{\mathbf 1}}
\def\R{{\mathbb R}}
\def\N{{\mathbb N}}
\newcommand{\skcrossr}{\raisebox{-0.25\height}{\includegraphics[width=1.5em]{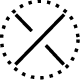}}}
\newcommand{\skcrf}{\raisebox{-0.25\height}{\includegraphics[width=1.5em]{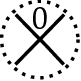}}}
\newcommand{\skcrv}{\raisebox{-0.25\height}{\includegraphics[width=1.5em]{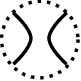}}}
\newcommand{\skcrh}{\raisebox{-0.25\height}{\includegraphics[width=1.5em]{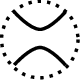}}}
\theoremstyle{definition}
\newtheorem{definition}{Definition}
\newtheorem{example}{Example}
\newcommand{\oast}{\overline{\ast}}
\newcommand{\uast}{\underline{\ast}}
\author{V.O.Manturov \footnote{Moscow Institute of Physics and Technology, Moscow 141700, Russia, Nosov Magnitogorsk State Technical University, Zhilyaev Laboratory of mechanics of gradient nanomaterials, 38 Lenin prospect, Magnitogorsk, 455000, Russian Federation}, I.M.Nikonov\footnote{Lomonosov Moscow State University}}
\title{Flat-virtual knot: introduction and some invariants}
\date{}
\begin{document}

\maketitle

\begin{abstract}

The motivation for this work is to construct a map from classical knots to virtual ones. What we get in the paper is a series of maps from knots in the full torus (thickened torus) to flat-virtual knots. We give definition of flat-virtual knots and presents Alexander-like polynomial and (picture-valued) Kauffman bracket for them.

\end{abstract}

Keywords: flat knot, virtual knot, generalized knot, picture-valued invariants, biquandle, cobordism

MSC2020: 57K12

\section{Introduction}

In the paper \cite{VirtualKnots} L.H.Kauffman introduced {\em virtual knot theory}
which turned out to be a generalisation of classical knot theory from the following points of view:

\begin{enumerate}

  \item Diagrammatically, virtual knot (and link) diagrams admit a new type of crossings (depicted by an encircled
  crossing) called {\em virtual} where the equivalence is given by {\em classical moves} dealing with only classical
  crossings and the {\em detour move} obtained by deleting a piece of curve connecting two points and containing only virtual crossings and
  virtual self-crossings inside, and redrawing it in another place.

  \item From the Gauss diagram point of view, classical knots are equivalence classes of rather special Gauss diagrams (corresponding to planar
  diagrams) modulo Reidemeister moves. Virtual knots, in turn, are equivalence classes of {\em all Gauss diagrams} modulo the same moves
  (we need not make artificial restrictions, hence virtual knots are more natural than classical ones).

\item Classical knots are knots in the thickened plane (thickened sphere); if one takes oriented thickenings of oriented $2$-surfaces of higher genera, then
    one gets to the theory of virtual knots: a {\em virtual link} is an equivalence class of links in thickened surfaces modulo {\em isotopy} of links inside the thickened surface,  {\em stabilisation}, and {\em destabilisation}.
    \footnote{By {\em stablisation} of a knot in $S_{g}\times I$ we mean a knot in $S_{g+1}\times I$ obtained by
    adding a handle to $S_{g}$ missing the knot projection; a {\em destabilisation} is the inverse operation}.

\end{enumerate}

A nice theorem due to G.Kuperberg \cite{Kup} (proven in many other ways, see, e.g., \cite{ClProj})
says that {\em minimal genus representative of a virtual link is unique}, i.e., if two virtual links $L_{1},L_{2}$
in thickened surfaces can be obtained from each other by isotopies, stabilisations, and destabilisation, then we can
destabilise them both to the minimal genus representative\footnote{For split links, the minimal genus surface might
be disconnected.} where the resulting links will be isotopic.





Along with virtual knots, one considers {\em flat knots}
\cite{Book},
{\em free knots}, \cite{Parity}
and other generalisations and simplifications.

{\em Flat knots} are obtained from virtual knots by forgetting over/under information at each crossing, i.e.,
we are left with isotopy classes of curves on surfaces without thickenings. For a fixed (say, minimal) genus
such objects are just free isotopy classes of curves.
These objects are widely studied and for flat knots there is a ``gradient descent algorithm'' which allows one to
get to the minimal representative (defined up to third Reidemeister moves) by using only decreasing moves and
destabilisations. This phenomenon is related to the {\em curve-shortening algorithm},
\cite{HassScott}, where the process of decreasing the length is closely related to the process of decreasing the
crossing number.


Initially, free knots appeared under the name of {\em isotopy classes of Gauss words} introduced by V.G.Turaev \cite{Tur},
where he conjectured to be trivial, but in \cite{Parity} the author not only showed non-triviality
of some free knots but also constructed {\em picture-valued invariants} (the parity bracket).
Unlike many well-known invariants which are valued in numbers, groups, modules, homology groups etc.,
the {\em parity bracket} and other similar invariants are valued in {\em linear combinations of knot diagrams}.
Defined on such a simple object as free knots, the parity bracket is strong enough to realise the following principle~\cite{book}:
\begin{quote}
    \em if a diagram is complicated enough then it realises itself.
\end{quote}

Without giving explicit definitions, we
suggest the reader just to think of free groups: the reduced form of a word appears inside any (non-reduced) word equivalent to it.
In other words, {\em local minimality yields global minimality}.
For standard knot theory, this is generally not the case: there are non-trivial diagrams of the unknot which cannot be reduced.

For flat virtual knots (objects which are much simpler than virtual knots themselves but nevertheless interesting enough for
the effects we are interested in) there is a parity bracket $K\mapsto [K]$ which associates to a flat knot $K$ a $\Z_{2}$-linear combination
$[K]$ of diagrams (may be up to some easy moves). The only two things we need to know are:
\begin{enumerate}
\item For a concrete knot diagram $K$ all summands in $[K]$ are obtained from $K$ by some {\em smoothings} (a sort of simplifications);
\item There are lots of diagrams $K$ (irreducibly odd ones) for which one has $$[K]=K,$$ i.e., for the flat knot represented by the
diagram $K$ the value of the bracket is equal to the diagram $K$ with coefficient $1$.
\end{enumerate}
These two facts lead to the following: if some diagram $K'$ is equivalent to $K$ then $[K']=K$, hence $K$ ``sits inside $K'$'',
i.e., we get the desired phenomenon:

For a diagram $K$ which is {\em complicated enough} (odd and irreducible) any diagram equivalent to it ($K'$)
``contains it as a subdiagram'' (i.e., $K$ can be obtained from $K'$ by some smoothings).

This parity bracket is based on the notion of {\em parity} which can be defined {\em axiomatically} and which is always trivial
for classical knots (in our context this means that for classical knots there are no ``odd'' diagrams, so $[K]=K$ never occurs).

Actually, picture-valued invariants and formulas like $[K]=K$ appear in some other contexts of virtual knot theory~\cite{KM1,KM2}, but in all those
theories one requires some {\em non-trivial homology of the ambient space}, i.e., to get ``nice pictures'', one has to draw them
on a nice 2-surface, the higher the genus of the surface is, the nicer are the pictures.

Knots in the full torus, $S^{1}\times D^{2}$ are not very far from classical ones, and the parity one can get from them does not
lead to very strong consequences. Nevertheless, the aim of the present paper is

{\em to construct a map from knots in the full torus to flat-virtual knots.}

The latter objects (will be defined later in the text) occupy a preliminary position between virtual knots and flat knots.
Roughly speaking, virtual knots are knots in thickenings of $2$-surfaces (up to stabilisation/destabilisation, i.e., handle addition/removal),
and flat knots are curves in $2$-surfaces (without over/undercrossing information). Flat-virtual knots
have some crossings with over/under information and some crossings without such.

Actually, free knots are simplifications of flat knots, so, already for flat knots one can apply the nice parity techniques.

So, the main objects of the present paper, the flat-virtual knots, are complicated enough to bear parity brackets and other
invariants, and, on the other hand, they appear as images of the maps from knots in the cylinder.

Hence, one can ``pull-back'' all the theory of ``picture-valued invariants'' and other virtual features to knots in the full torus.

The present paper deals with a ``mixed theory'' of knots which have {\em classical}, {\em flat}, and {\em virtual}
crossings. If we draw such diagrams on a surface, only classical and virtual crossings will be enough.

The paper is organised as follows. In the next section, we give definitions and prove easiest theorems.
The following three sections are devoted to the demonstration of three ``virtual'' features of flat-virtual knots:
In Section 3, we discuss the Alexander matrix for flat-virtual knot which is ``upgraded'' by adding two new types
of variables corresponding to flat crossings and virtual crossings. These new variables make the determinant of the matrix
non-zero (unlike the classical case).

In Section 4, we define a version of (Kauffman parity) bracket which gives rise to a picture-valued invariant
of flat-virtual knots.

Finally, in Section 5, we deal with cobordisms of flat knots and prove the theorem that if a knot in the thickened cylinder is slice then its image (flat-virtual knot) is slice as well.

This gives rise to sliceness obstruction for classical objects: knots in the full torus $S^{1}\times D^{2}$.

The appendix contains a sketch of theory of multi-flat knots.

\section{Acknowledgements}

The authors extremely grateful to 
Louis Hirsch Kauffman for many fruitful and stimulating discussions and to Darya Popova.

The study was supported by the grant of Russian Science Foundation (No. 22- 19-20073 dated March 25, 2022 “Comprehensive study of the possibility of using self-locking structures to increase the rigidity of materials and structures”).

\section{Flat knots and links: basic definitions}

A {\em flat-virtual link diagram} is a four-valent graph on the plane
where each vertex is of one of the following three types:
\begin{enumerate}
\item classical (in this case one pair of edges is marked as an {\em overcrossing strand});
\item flat;
\item virtual.
\end{enumerate}

The number of link components of a flat-virtual link diagram is just the number of unicursal
component of such. A {\em flat-virtual knot diagram} is a one-component flat-virtual link diagram.

The number of components is obviously invariant under the moves listed below, so, it will be reasonable
to talk about the number of components of a {\em flat-virtual link.}

\begin{dfn}
Here a {\em flat-virtual link} is an equivalence class of flat-virtual link diagrams modulo the following moves:
\begin{enumerate}
\item Classical Reidemeister moves which affect only classical crossings [there are three such moves in the non-orientable
case];
\item Flat Reidemeister moves which refer to flat crossings only and are obtained by forgetting the information about
overpasses and underpasses [the second Reidemeister move and the third Reidemeister move];
\item Mixed flat-classical Reidemeister-3 move when a strand containing two consecutive flat crossings
passes through a classical crossing of a certain type [the over-under structure at the unique classical crossing is preserved];
\item Virtual detour moves: a strand containing only virtual crossings and self-crossings can be removed
and replaced with a strand with the same endpoints where all new intersections are marked by virtual crossings.
\end{enumerate}

\end{dfn}

\begin{remark}
The last move can be expressed by a sequence of local moves containing purely virtual Reidemeister
moves and third mixed Reidemeister moves where an arc containing virtual crossings only goes through
a classical (or a flat) crossing.
\end{remark}

\begin{dfn}
By a {\em restricted flat} virtual link we mean an analogous equivalence class where
we forbid the third Reidemeister move with three flat crossings.

\end{dfn}

\begin{remark}
Following Bar-Natan's point of view, virtual knots (links) can be considered as having only classical crossings
meaning that the Gauss diagram (multi-circle Gauss diagram) indicates the position of crossings on the plane
and the way how they have to be connected to each other (the way the connecting strands virtually intersect each other
does not matter because of the detour move).

The same way, one can define flat-virtual knot and link diagrams can be encoded by Gauss diagrams with
the only difference being that when a classical crossing carries two bits of information (over/undercrossing and
writhe) while the flat crossing only carries one bit of information (clockwise rotation).
\end{remark}

\begin{remark}
There is an obvious forgetful map from flat-virtual links to flat links: we just make all classical crossing
flat and forget the over-under information.
\end{remark}

Define this forgetful map by $\zeta:K \mapsto \zeta(K)$ and call $\zeta(K)$ the {\em flat knot corresponding
to $K$}.

\begin{theorem}
$\zeta$ is a well defined map: if $K,K'$ are equivalent flat-virtual link diagrams then $\zeta(K),\zeta(K')$
are equivalent flat knots.
\end{theorem}
\begin{proof}
    One can see that the forgetful map turns the Reidemeister moves of flat-virtual knots into the moves of flat knots. 
\end{proof}

Flat knots project further to free knots, thus we get a map from flat-virtual knots to free knots.

Given a cylinder $C=S^{1}\times [0,1]$ with angular coordinate $\alpha\in [0,2\pi=0)$ and vertical coordinate $z\in [0,1]$.
Fix a number $d\in \N$ and say that two points $x,y$ on $C$ are {\em equivalent} if they have the same vertical coordinate and
their angular coordinate differs by $\frac{2\pi k}{d}$ for some integer $k$.
Likewise, for the torus $T^{2}$ with two angular coordinates $(\alpha,\beta)\in [0,2\pi=0)$ we fix a lattice $l$
as a discrete subgroup. Say, choosing $d_{1},d_{2}$ our group contains points with angular coordinates
$(\frac{2\pi k_{1}}{d_{1}},\frac{2\pi k_{2}}{d_{2}})$. We say that two points $A,B$ on the torus are {\em equivalent} if their
difference $A-B$ belongs to the subgroup $l$.

Now, having a diagram $K$ on the cylinder (torus) with a fixed number $d$ (lattice $l$) we define
a {\em flat-virtual} diagram $\phi_{d}(K)$ (resp., $\phi_{l}(K)$)
(up to virtual detour moves) as follows. First, we assume without loss of generality that
$K$ is {\em generic with respect to the subgroup}, i.e., for any two distinct equivalent points $e,e'$ belonging
to edges are not crossing points and the tangent vectors at these edges are transverse.

In both cases (for $\phi$ and for $\phi'$) for each pair of equivalent points on the edges
(say, $(e_{j},e'_{j})$) we create a new {\em flat} crossing in the following manner.
Let $e'_{j}= e_{j}+g$, where $g$ is an element of the corresponding group ($\Z_{d}$ or $l$).
We choose one
of these points (never mind which one, say, $e_{j}$), remove a small neighbourhood $N=U(e_{j})$
with endpoints $A,B$, shift it by $g$ and connect $A$ to $A+g$ and $B$ to $B+g$ by any curves on the
plane not passing through any crossings (all crossings on the newborn curves $[A,A+g], [B,B+g]$
are declared to be virtual).

\begin{theorem}
The maps $\phi_{d},\phi_{l}'$ defined above are well defined maps from links on the thickened cylinder (resp., thickened torus)
to flat-virtual links. In other words, if $K$ and $K'$ are isotopic diagrams on the cylinder (torus)
then $\phi_{d}(K)\sim \phi_{d}(K')$
(resp., $\phi'_{l}(K)\sim \phi'_{l}(K)$) where
$\sim$
denotes the equivalence of flat-virtual links.
\end{theorem}

\begin{proof}
    We consider the map $\phi_d$, the proof for $\phi'_l$ is analogous.

    We should check that two diagram $K$ and $K'$ which are connected by a diagram isotopy or a Reidemeister move, give equivalent diagrams $\phi_d(K)$ and $\phi_d(K')$.

    Consider an isotopy $K(t)$ of the diagram $K$. For any $1\le k<d$ consider the diagram $K_k$ which is obtained from $K$ by the angular coordinate shift by $\frac{2\pi k}{d}$, and call it the \emph{$k$-th clone} of $K$. The structure of $\phi_d(K(t))$ changes when a pair of equivalent points (dis)appears in $K$, i.e. a new intersection between $K$ and some its clone (dis)appears.  For a generic isotopy $K(t)$ this event corresponds to a Reidemeister move between the diagram $K(t)$ and a clone $K_k(t)$. There can be one of three occurrences:
    \begin{itemize}
        \item an intersection between an arc of $K(t)$ and an arc of $K_k(t)$ appears. This case correspond to a flat second Reidemeister move;
        \item an arc of the clone pass over a crossing in $K(t)$. This case produces a mixed flat-classical third Reidemeister move;
        \item an arc of another clone $K_{k'}(t)$ pass over an intersection between the diagram $K(t)$ and its clone $K_k(t)$. This case corresponds to a flat third Reidemeister move.
    \end{itemize}

    Thus, an isotopy of $K(t)$ produces a sequence of isotopy, flat second and flat third Reidemeister moves of $\phi_d(K(t))$.

    Assume that $K$ and $K'$ are connected by a Reidemeister move. We can suppose that the place where the move occurs is small enough and distinct from the clones of the diagram $K$. Then one can realize the same move between the diagrams $\phi_d(K)$ and $\phi_d(K')$. Hence, the diagrams $\phi_d(K)$ and $\phi_d(K')$ are equivalent.
\end{proof}

Moreover, for the special case of knots in the full torus for $d=2$ we have a more exact statement
\begin{theorem}
If $K$ and $K'$ are isotopic diagrams on the cylinder then
$\phi(K)$ and $\phi(K')$ are diagrams of the same restricted flats virtual link.
\end{theorem}
\begin{proof}
    We can repeat the reasonings of the previous theorem. The difference is in the case $d=2$ there can not be situations when the diagram $K(t)$ and its two clones $K_k(t)$ and $K_{k'}(t)$ have common point because $K(t)$ has only one clone. Thus, we don't need the flat third Reidemeister move.
\end{proof}

\section{The Alexander-like polynomial}

In the present section, we construct an Alexander-like polynomial. In a way very similar
to \cite{AlexGen} (see also~\cite{Man2002a,Man2002b}) where virtual links were considered, for a diagram having $n$ crossing
we shall consider an $n\times n$-matrix, an analogue of the classical Alexander
matrix. The determinant of this matrix, similarly to the case of virtual knots,
will be non-zero (unlike classical knot case, where it is zero). This determinant
will be an invariant.

Let $K$ be an oriented flat-virtual link diagram. We consider it as an image of a disjoint collection
of circles $S^{1}\sqcup\cdots \sqcup S^{n}$ immersed in the plane; the immersion has to be embedding
outside crossings (classical, virtual, and flat), and at crossings, the two arcs of the circle
intersect transversally. All $S^{j}$ are link components.

Almost without loss of generality, we may assume that each link component passes through
at least one classical {\em underpass}. Having this, we can define {\em long arcs} as follows:
a {\em long arc} is the image of a segment of $S^{1}$ which goes from an underpass to the next
underpass. So, long arcs may contain overpasses, virtual crossings, and flat crossings.
It is obvious that the number of long arcs is equal to the number of crossings: for each
crossing there exist exactly one long arc outgoing from this crossing.
We enumerate crossings from $1$ to $n$ arbitrarily and enumerate long arcs
in such a way that the $i$-th long arc emanates from $i$-th crossing.

Each long arc is subdivided into {\em short arcs} by virtual crossings and flat crossings.

After that, we define {\em labels} of short arcs as follows. The labels
will be monomials of the form $u^{k}v^{l}, k,l\in \Z$ in variables $u,v$.
Each long arc starts with a short arc that we call {\em initial}. We associate
the monomial $1$ with the initial arc. After that, when passing through a
virtual crossing, we multiply the label by $v^{\pm 1}$, and when passing through
a flat crossing, we multiply the label by $u^{\pm 1}$. The exponent is chosen as shown in Fig.\ref{inFig}.

\begin{figure}
\centering\includegraphics[width=100pt]{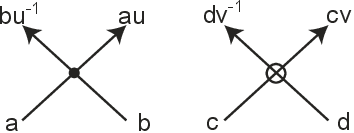}
\caption{Edge labels for flat (left) and virtual (right) crossings}
\label{inFig}
\end{figure}

So, for a short arc $s$, we denote its label by $l(s)\in \Z[v^{\pm 1}, u^{\pm 1}]$.

We define the matrix $A(M)$ to be the $(n\times n)$-matrix whose
entries are defined as follows.

Consider the $i$-th crossing. Assume it is incident to three short arcs,
the initial short arc of the $i$-th arc is emanating, some short arc with
label $l'$ belonging to the the long arc number $i'$ is incoming, and some short arc
with label $l''$ belonging to the long arc number $i''$ forms an overcrossing.

Let the writhe number of this $i$-th crossing be $w(i)$.

Then the $i$-th row of the matrix $A$ contains the elements
$a_{i,i} = -1, a_{i,i'}= l' t^{w(i)}, a_{i,i''}= l'' (1-t^{w(i)}).$

If some of the three labels $i,i',i''$ coincide, we just sum up
the coefficients as above to get the corresponding entry.

Hence we get a matrix $A(K)$ with entries in the Laurent polynomials in $t,v,u$.

Define $\Delta(K) = Det(A(K))$.

We say that two Laurent polynomials, $P$ and $P'$ in $\Z[t^{\pm 1}, v^{\pm 1}, u^{\pm 1}]$ are {\em similar}
(we write $P\doteq P'$ if $P' = \pm  t^{a}v^{b}u^{c}P$ for some integers $a,b,c$).

The main theorem of the present section is the following
\begin{theorem}
Let $K,K'$ be two equivalent flat-virtual link diagram.
Then $\Delta(K)\doteq \Delta(K')$.
\end{theorem}

The proof repeats step-wise the arguments of~\cite[Theorem 18.9]{ManKnotTheory}.

\begin{corollary}
If for a flat-virtual diagram $K$ $\Delta(K)\neq 0$ then the flat-virtual knot $K$ is not classical
(can not be realised by a classical representative).
\end{corollary}


\begin{theorem}
If two classical link diagrams $L,L'$ are equivalent in the category of flat-virtual links
then they are classically isotopic.
\end{theorem}

\begin{proof}
    Consider the map $\mu$ on flat-virtual diagrams which replaces all flat crossings by virtual ones. The map $\mu$ induces a well-defined map from flat-virtual links to virtual links. Hence, the classical diagrams $L=\mu(L)$ and $L'=\mu(L')$ are equivalent as virtual links. By Kuperberg's theorem~\cite{Kup}, the links $L$ and $L'$ are equivalent as classical links.
\end{proof}

\section{A Kauffman bracket (parity bracket) for flat-virtual knots}

The standard Kauffman-bracket version of the Jones polynomial for classical knot works as follows.
Having a classical knot diagram $K$, we take all its crossings and smooth each classical crossing
in one of the two possible ways,
the $A$-smoothing: $\skcrossr\to \skcrh$ and the $B$-smoothing: $\skcrossr\to \skcrv$.
We associate weight $a$ with each $A$-smoothing and weight $a^{-1}$ with each $B$-smoothing.
Having smoothed all classical crossings, we are left with a collection of $d$ curves on
the plane, and we associate $(-a^{2}-a^{-2})^{d-1}$ (or in some version $(-a^{2}-a^{-2})^{d}$)
with this collection of curves (we may think that we associate the factor $(-a^{2}-a^{-2})$ with each factor).

Written compactly, the Kauffman bracket looks like
$$\langle K\rangle = \sum_{s} a^{\alpha(s)-\beta(s)}(-a^{2}-a^{-2})^{\gamma(s)-1},$$
where $s$ runs over all smoothings of classical crossings of the diagram $K$
(there are $2^{n}$ of them if the total number of classical crossings is $n$),
$\alpha(s)$ and $\beta(s)$ for each smoothing denote the number of crossings smoothed
positively (resp., negatively) with respect to $s$, and $\gamma(s)$ denote the resulting
number of circles.

The bracket $\langle K\rangle$ turns out to be invariant under the second and the third Reidemeister
moves, and it gets multiplied by $(-a)^{\pm 3}$ after applying the first Reidemeister move.
The latter can be easily handled by a simple normalisation, which leads to the Jones polynomial
of the classical knot.

In the case of virtual knot we can do the same verbatim if we just disregard virtual crossings:
we smooth all classical crossings as above, and we are left with a system of {\em immersed curves};
we count the curves as before and we are left with the same formula. The invariance proof under
classical Reidemeister-2 and Reidemeister-3 moves is literally the same, and when we apply
the detour move, there is actually nothing to prove. Indeed, we are smoothing classical crossings and
we are interested in the ways how they are connected, so it is not important how the circles are
drawn (embedded) in $\R^{2}$.

A crucial observation which appeared in several papers
(my paper, Miyazawa etc.) was that by looking at {\em states} (results of smoothings) of classical
crossings we can get much more information than just the {\em number} of circle.
Indeed, if we live in a certain (thickened) $2$-surface, we get a collection of curves on this $2$-surfaces
and we can treat the formula
$$\langle K\rangle =\sum_{s} a^{\alpha(s)-\beta(s)} K_{s}$$
in a more meaningful way by looking at $K_{s}$ as something more intricate than just $(-a^{2}-a^{-2})^{\alpha(s)-1}$.

Hence, we distinguish between {\em classical crossings} (which are to be smoothed) and {\em virtual crossings}
(which are left as they are and may give rise to some interesting information). In
\cite{Parity}, the author considered {\em free knots} where no over/under information at classical crossings
was specified, there was no natural way to distinguish between $A$-smoothings and $B$-smoothings and
the only thing which could be left from
$a^{(\alpha(s)-\beta(s))}$ was just $1$, the non-trivial element of the 2-element field.

Nevertheless, the protagonist of \cite{Parity} was the notion of {\em parity}. For free knots we are distinguishing
between {\em even crossings} and {\em odd crossings}. After that, we shall smooth all even crossings in one of the two
ways, and do nothing with odd crossings. This will lead to {\em states}, i.e., graphs whose vertices
are odd crossings (among the initial crossings).
For {\em free knots} (one-component links) one way to define the parity is the {\em Gaussian parity}: we take a Gauss diagram
and for each chord look how many chords are linked with this one\footnote{We say that chords $a,b$ are linked if their end points
alternate}. To have a feeling how parity behaves under Reidemeister moves, one should consider $2$-component links
and say that a pure crossing (formed by one component twice) is even and a {\em mixed} crossing (formed by two different
componentss) is odd. Then one easily gets to {\em parity axioms} which are satisfied by chords of a
Gauss diagram.

Hence, in the parity bracket we have

$$[K]= \sum_{s-odd} K_{s} \in \Z_{2}{\cal G}$$

Here for a diagram with $n$ even crossings and $m$ odd crossings we take $2^{n}$ possible smoothings $s$
at even crossings and the corresponding diagrams $K_{s}$. To make $[K]$ {\em invariant under Reidemeister moves},
one should consider the resulting smoothed diagrams $K_{s}$ modulo some equivalence relations, so, ${\cal G}$
actually consists of equivalence classes of diagrams modulo some moves.

The greatest feature is that when passing from $K$ to $[K]$ almost no moves are left: ${\cal G}$ considered up
to second Reidemeister moves and some simple factorisation.
So, up to some small justification, we can say that $[\cdot]$ takes a free knot to a graph, and,
if $K$ has only odd crossings then by construction $[K]=K$.

Without going much in detail with the parity bracket, let us pass to the {\em virtual-flat bracket} to
be denoted by $\langle K\rangle_{flat})$.

The reader should pay the most attention to the following things:

\begin{enumerate}
\item We smooth classical crossings according to the formula
$\skcrossr\mapsto a \skcrv + a^{-1}\skcrh$

\item We do not smooth any other crossings and take the resulting diagrams
to constitute the corresponding $K_{s}$.

\item The resulting invariant $\langle K\rangle_{flat} = \sum_{s}a^{(\alpha(s)-\beta(s))}K_{s}\in \Z[a,a^{-1}]{\cal G}_{f}$
takes value in $\Z[a,a^{-1}]$-linear combinations of some equivalence classes of diagrams
with flat and virtual crossings.

\end{enumerate}

Briefly, we get rid of all classical crossings by smoothings and define ${\cal G}_{f}$ as follows.
Elements of ${\cal G}_{f}$ are equivalence classes of plane diagrams with two types of crossings, virtual ones
and flat ones. Two diagrams $D_{1},D_{2}$ from ${\cal G}_{f}$ are called {\em equivalent} if $D_{2}$ can be
obtained from $D_{1}$ by applying a sequence of the following moves:
\begin{enumerate}
\item The flat second Reidemeister move
\item The flat third Reidemeister move
\item The detour move which encompasses all ways of erasing and redrawing curves containing virtual
crossings only.
\item $K\sqcup \bigcirc = (-a^{2}-a^{-2}) K$ meaning that the graph obtained from $K$ by adding a separate
empty circle is equal to $K$ multiplied by $(-a^{2}-a^{-2})$.
\end{enumerate}

We can naturally define {\em oriented} flat-virtual links by putting orientation on each component of
a link. Then for a link diagram $L$ we define its (classical) writhe number $w(L)$ by counting the writhe numbers
of classical crossings algebraically.  Obviously, $w(L)$ does not change when applying all Reidemeister
moves except the first classical one, and changes by $\pm 1$ when applying the first Reidemeister move.

\begin{theorem}
The flat-virtual bracket $K\mapsto \langle K\rangle_{flat}$ is invariant under all flat-virtual Reidemeister moves except for the first classical Reidemeister move.

Moreover, if for an oriented flat-virtual knot diagram ${\bar K}$ we define the \emph{flat-virtual Jones polynomial}

$$J_{f}({\bar K})=(-a)^{-3w({\bar K})} \langle K\rangle_{flat},$$
where $K$ is the underlying non-oriented diagram,

then $J_{f}$ is an invariant of flat-virtual links.

\end{theorem} 

\begin{proof}
The proof repeats the reasonings for the classical or virtual Kauffman bracket~\cite{VirtualKnots}. The skein relations implies invariance of the bracket under second and third Reidemeister move on classical crossings, and show that first Reidemeister move multiplies the bracket by $(-a)^{\pm 3}$. Reidemeister moves on flat and virtual crossings commute with the bracket. Being applied to a mixed classical-virtual of classical-flat third Reidemeister move, the bracket produces two virtual or flat second Reidemeister moves~\cite[Figure 15]{VirtualKnots}.  
\end{proof}


Let us consider a simple example obtained from the Whitehead link, see Fig.~\ref{fig:whitehead_link} top left. The complement to the trivial (red) component is homeomorphic to the thickened cylinder. Hence, the other component defines a knot $K$ in the cylinder (Fig.~\ref{fig:whitehead_link} top right). For $d=2$, the diagram of the knot $K$ has two pairs of equivalent points $A$ and $B$ (Fig.~\ref{fig:whitehead_link} bottom right). Hence, the the corresponding flat-virtual knot $\phi_2(K)$ (Fig.~\ref{fig:whitehead_link} bottom left) has two flat crossings. 
 
\begin{figure}[h]
\centering\includegraphics[width=0.5\textwidth]{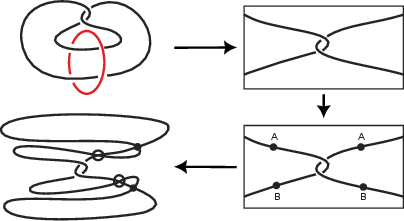}
\caption{Whitehead link (top left), the corresponding cylinder knot $K$ (top right), and the flat-virtual knot $\phi_2(K)$ (bottom left). The cylinder is presented as a rectangle with the vertical sides identified.}\label{fig:whitehead_link}
\end{figure}

Let us calculate the flat-virtual Jones polynomial of the flat-virtual knot $\phi_2(K)$ (Fig.~\ref{fig:whitehead_bracket}). 

\begin{figure}[h]
\centering\includegraphics[width=0.8\textwidth]{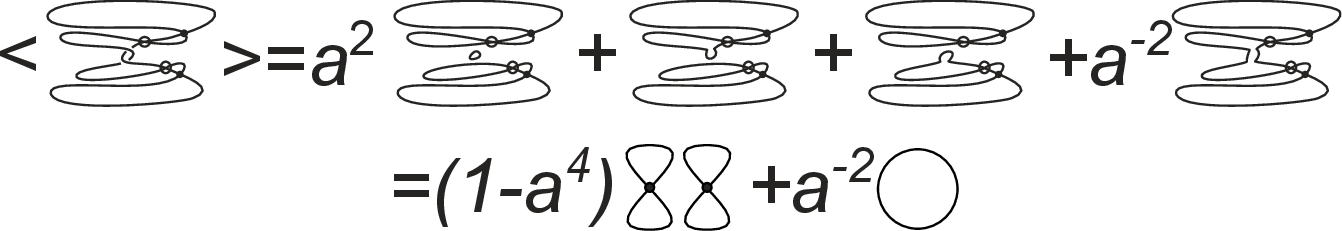}
\caption{Jones polynomial of $\phi_2(K)$}\label{fig:whitehead_bracket}
\end{figure}

Then
\[
J_f(\phi_2(K))=(-a)^{-6}\langle \phi_2(K)\rangle=(a^{-6}-a^{-2})
\raisebox{-0.29\height}{\includegraphics[width=2em]{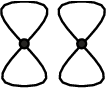}}+a^{-8}\bigcirc.
\]

Since $J_f(\phi_2(K))\ne\bigcirc$, we conclude that $\phi_2(K)$ is a non-trivial flat-virtual knot.

\section{Cobordisms}

This section contains a sketch of a future research. Here we mention crucial points:

\begin{enumerate}

\item Sliceness in classical knot theory plays a crucial role.
There are two types of sliceness: {\bf locally flat} sliceness and {\bf smooth} sliceness.

\item The difference between these two notions can lead to some very strong consequences,
for example, knots which are topologically slice but not smoothly slice, allow one to construct
exotic structures on $\R^{4}$.

\item Sliceness obstructions and cobordism bounds in classical knot theory often
require very powerful techniques (for example, Rasmussen invariant \cite{Ras}), which, in many
cases is difficult to calculate (one requires spectral sequences to calculate it in the general cases).

\item On the other hand, there are numerous ways of getting ``easy'' sliceness obstructions for
virtual knots. For example, in joint paper by D.A.Fedoseev and the first named author, \cite{FM}, it is proved that
under some circumstances {\em if a free knot is (smoothly) slice then it is elementarily slice} (by a slicing disc
without triple points and cusps). The condition of being ```elementarily slice'' is easy to check.

\item The purpose of this chapter is to formulate a general theorem saying that
``if a knot (link) in the full torus is slice then its image (a flat-virtual link) is slice
(the definition is given below).
Potentially, one can get ``easy'' obstructions from theory of virtual knots and free knots (say, \cite{FM}).

\end{enumerate}

The main result of the present section is the following

\begin{theorem}
If a link $K$ in the full torus $S^{1}\times D^{2}$ is smoothly slice in
 $S^{1}\times D^{3}$, then for each $d$ the flat-virtual link
 $\phi_{d}(K)$ is slice.
\end{theorem}

Actually, the proof will follow from the definitions once the theorem and all definitions
are precisely formulated.

\begin{dfn}
A
$k$-component link $L=L_{1}\sqcup \cdots \sqcup L_{k}\in S^{1}\times D^{2}$
is {\em smoothly slice}, if there are smoth discs $D^{2}_{1},
\cdots, D^{2}_{k}$, properly disjointly embedded in
 $S^{1}\times D^{2}\times [0,\infty)$ in such a way that
$\partial D_{j}= L_{j}\subset S^{1}\times D^{2}\times\{0\}\equiv S^{1}\times D^{2}$.
\end{dfn}

Now, we pass to the  sliceness of flat-virtual links.
Let $L$ be a flat-virtual link given by its
diagram $L_{1} \sqcup \cdots \sqcup L_{k}$ on the plane
$\R^{2}$. From the definition of sliceness of $L$
we shall need a ``two-dimensional diagram''
for the set of slicing discs $D_{1},\cdots, D_{k}$ in
the half-space $\R^{2}\times [0,\infty)$,
with the initial diagram on the boundary $\R^{2}\times \{0\}\equiv\R^{2}$.

\begin{dfn}
A {\em two-dimensional diagram} is a set of  properly disjointly immersed
discs $\sqcup D_{j} \to \R^{2}\times [0,\infty)$ endowed with a certain additional
structure.
\end{dfn}

\subsection{Generic immersions and structure}

A generic map  $f:\sqcup D_{j}\to \R^{2}\times [0,\infty)$ should have the following types
of points having more than one preimage:
\begin{enumerate}
\item double lines,
\item cusps,
\item triple points.
\end{enumerate}

Herewith each double line cares the following information: it is either {\em classical} (in
this case we indicate which sheet is over, the other one being under) or
{\em virtual} or {\em flat}.

Besides, we require that the double line approaching the cusp is
either classical or virtual
\footnote{Here we refer to standard 2-knot diagrams}.

Double lines passing through triple points should be of the following types:
three classical ones (standard), two flat and one classical, three flat ones (only for
$d>2$) or two virtual ones and one more double line (of
any type).

\subsection{
The proof sketch of the theorem about cobordism}

If there is a set of slicing discs in $S^{1}\times D^{2}\times [0,\infty)=
S^{1}_{\phi}\times I_{s}\times I_{t}\times [0,\infty)$,
then it suffices only to take a generic projection to $S_{\phi}^{1}\times I_{s}\times[0,\infty)_{u}$.

After that one should form flat double lines by identifying those
pairs of points which share $s,u$ coordinates and have coordinates $\phi$
which differs by $\frac{2\pi j}{d},j\in \N$.
Thus, we get flat double lines.

Virtual double lines naturally appear
when projecting
$S_{\phi}\times I_{s}\times [0,\infty)_{u}$ onto $\R^{2}\times [0,\infty)_{u}$
in those places where the projection has at least two preimages.

\section{Further directions and unsolved problems}

The motivation for this work is to construct a map from classical knots to virtual ones. In the paper~\cite{MN1} we managed to construct maps from classical braids to virtual braids. Extension of those maps to ones compatible with the Markov moves would give the desired map classical knots (considered as closures of braids) to virtual knots.

A further generalization of flat-virtual knot theory is to consider crossing with various group or homotopical labels. A possible variant of such a theory is described in the appendix.

\pagebreak
\appendix

\section{Multi-flat knots}

In this section we define a generalization of flat-virtual knots. The multi-flat knots considered below have crossings of several types which are ordered. The classical crossings have the lowest type, the crossings of other types are flat. This hierarchy of crossings appears naturally when one considers link diagrams in surfaces and (branched) covering maps between them. Self-intersections of the projection of a link diagram by a covering map are treated as flat crossings of a new type.   

\subsection{Definition}

\begin{definition}
    Let $S$ be an oriented connected compact surface. Let $k\in\mathbb Z_{\ge 0}$. A \emph{diagram with flat crossings of $k$ types} (or \emph{$k$-flat link diagram}) is a $4$-valent graph $D$ embedded in $S$ whose vertices either have a classical undercrossing-overcrossing structure (Fig.~\ref{fig:crossing_types} left) or are marked with numbers $i$, $1\le i\le k$ (Fig.~\ref{fig:crossing_types} right). The number $i$ is called the \emph{type} of the crossing; the type of the classical crossings is assumed to be $0$. Denote the set of $k$-flat diagrams in $S$ by $\mathcal D_k(S)$.

\begin{figure}[h]
\centering\includegraphics[width=0.2\textwidth]{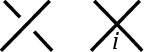}
\caption{A classical crossing and a flat crossing of type $i$ ($i$-crossing)}\label{fig:crossing_types}
\end{figure}

    Consider the classical second and third Reidemeister moves $\Omega_2(0)$ and $\Omega_3(0,0)$ (Fig.~\ref{fig:reidemeister_moves} left) and analogous flat moves $\Omega_2(i)$, $\Omega_3(0,j)$ and  $\Omega_3(i,j)$,  $1\le i\le j\le k$ (Fig.~\ref{fig:reidemeister_moves} right).

\begin{figure}[h]
\centering\includegraphics[width=0.7\textwidth]{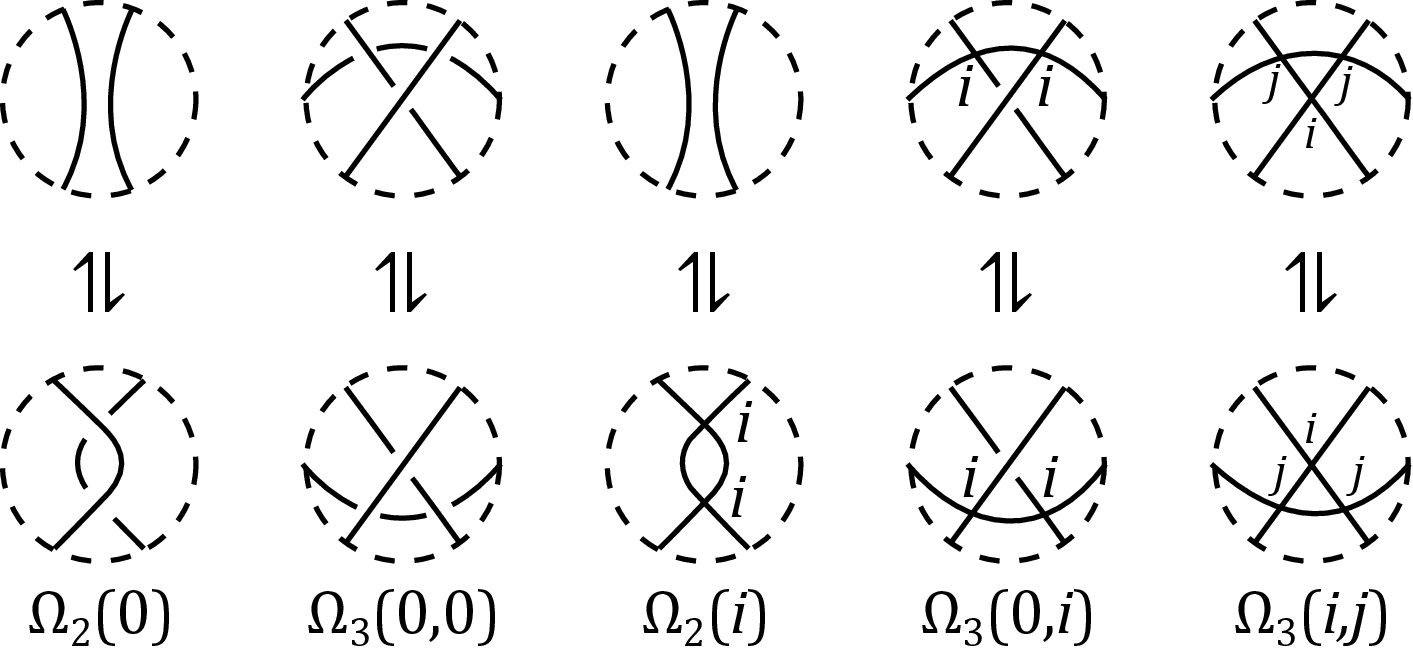}
\caption{Reidemeister moves}\label{fig:reidemeister_moves}
\end{figure}

    Define the move $\Omega_1^d(i)$, $0\le i\le k$, as shown in Fig.~\ref{fig:reidemeister_move1}. In the case $i=0$, we consider all variants of the undercrossing-overcrossing structure of the crossings in the move.

\begin{figure}
\centering\includegraphics[width=0.3\textwidth]{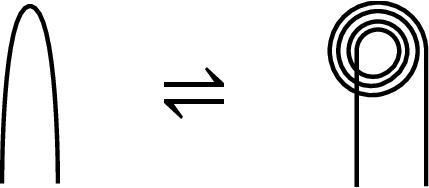}
\caption{Move $\Omega_1^d(i)$ (case $d=3$). The crossings are of type $i$}\label{fig:reidemeister_move1}
\end{figure}

    Fix tuples $\mathbf d=(d_0,d_1,\dots,d_k)\in\mathbb Z_{\ge 0}^{k+1}$ and $\mathbf\epsilon=(\epsilon_0,\epsilon_1,\dots,\epsilon_k)\in\{0,1\}^{k+1}$. Define the set of \emph{multi-flat links of type $(\mathbf d, \mathbf\epsilon)$} as the set $\mathcal L_{\mathbf d, \mathbf\epsilon}(S)$ of equivalence classes of $k$-flat diagrams in the surface $S$ modulo diagram isotopies and the moves 
    \begin{enumerate}
        \item first Reidemeister moves $\Omega_1^{d_i}(i)$, $0\le i\le k$,
        \item second Reidemeister moves $\Omega_2(i)$, $0\le i\le k$,
        \item third Reidemeister moves $\Omega_3(i,i)$ for $i$ such that $\epsilon_i=1$,
        \item mixed third Reidemeister moves $\Omega_3(i,j)$, $0\le i<j\le k$.
    \end{enumerate}

    When $\mathbf d=(1,\dots,1)$ and $\mathbf\epsilon=(1,\dots,1)$ we denote $\mathcal L_{\mathbf d, \mathbf\epsilon}(S)=\mathcal L_k(S)$ and call its elements \emph{$k$-flat links}.
\end{definition}

If we forget about under-overcrossing structure of the classical crossings and treat them as flat crossings, we get the theory of \emph{flat multi-flat links} $\bar{\mathcal L}_{\mathbf d, \mathbf\epsilon}(S)$ which has $k+1$ types of flat crossings. In particular, we get \emph{flat $k$-flat links} $\bar{\mathcal L}_{k}(S)=\bar{\mathcal L}_{(1,\dots,1), (1,\dots,1)}(S)$.
Below we denote the set of flat $k$-flat diagrams in $S$ by $\bar{\mathcal D}_k(S)$.

\begin{example}
\begin{itemize}
    \item The $0$-flat links $\mathcal L_0(S^2)=\mathcal L_{(1),(1)}(S^2)$ in the sphere are classical links.
    \item The $1$-flat links $\mathcal L_1(S^2)=\mathcal L_{(1,1),(1,1)}(S^2)$ in the sphere are virtual links. The $0$-crossings are classical, and $1$-crossings are virtual.
    \item The $2$-flat links $\mathcal L_2(S^2)=\mathcal L_{(1,1,1),(1,1,1)}(S^2)$ are flat-virtual links~\cite{MN2}. The $0$-crossings are classical, $1$-crossings are flat, and $2$-crossings are virtual crossings.    
    \item The multi-flat links $\mathcal L_{(0),(1)}(S^2)$ are regular classical links. 
    \item The multi-flat links $\mathcal L_{(1),(0)}(S)$ are doodles in the surface $S$.
\end{itemize}
\end{example}

\begin{remark}
    Multi-flat links can be considered as special cases of generalized knot theories~\cite{BF1,BF2,F} defined by R. Fenn.  A \emph{generalized knot theory} is defined on diagrams whose crossings are tagged by a type (an element of some set $T$ with an involution; the involution indicates the sign of crossings).  The moves of the theory are Reidemeister moves allowed for certain combinations of crossing types.

    For multi-flat knots, the set of types is $T=\{0_+,0_-,1,\dots,k\}$ with the involution $\bar 0_\pm=0_\mp$, $\bar i=i$, $1\le i\le k$.
\end{remark}

\subsection{Quotient map}

\begin{proposition}
    For any tuples $\mathbf\epsilon$, $\mathbf\epsilon'$ in $\{0,1\}^{k+1}$ such that $\epsilon'_i\ge\epsilon_i$, $0\le i\le k$, and tuples $\mathbf d$, $\mathbf d'$ in $\mathbb Z_{\ge 0}^{k+1}$ such that $d'_i\mid d_i$, $0\le i\le k$, the identity map on the $k$-flat diagrams $D\in\mathcal D_k(S)$ induces a natural projection $p\colon \mathcal L_{\mathbf d, \mathbf\epsilon}(S)\to\mathcal L_{\mathbf d', \mathbf\epsilon'}(S)$.
\end{proposition}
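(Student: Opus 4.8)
The plan is to exploit that both $\mathcal L_{\mathbf d,\mathbf\epsilon}(S)$ and $\mathcal L_{\mathbf d',\mathbf\epsilon'}(S)$ are, by definition, quotients of one and the same set $\mathcal D_k(S)$ of $k$-flat diagrams, by the equivalence relations generated by diagram isotopy together with the respective lists of moves. A map sending the class of a diagram $D$ to the class of $D$ is well defined precisely when the equivalence relation defining $\mathcal L_{\mathbf d,\mathbf\epsilon}(S)$ is contained in the one defining $\mathcal L_{\mathbf d',\mathbf\epsilon'}(S)$; since the former is generated by local moves, it suffices to check that each such move carries a diagram to one equivalent to it in $\mathcal L_{\mathbf d',\mathbf\epsilon'}(S)$. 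Thus the proof reduces to going through the four families of moves in the definition of $\mathcal L_{\mathbf d,\mathbf\epsilon}(S)$.

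For the families $\Omega_2(i)$, $0\le i\le k$, and $\Omega_3(i,j)$, $0\le i<j\le k$, there is nothing to do, since these moves occur verbatim in the move list of $\mathcal L_{\mathbf d',\mathbf\epsilon'}(S)$ regardless of $\mathbf d,\mathbf d',\mathbf\epsilon,\mathbf\epsilon'$. If a monochromatic move $\Omega_3(i,i)$ is allowed in the first theory then $\epsilon_i=1$, and then $\epsilon'_i\ge\epsilon_i$ forces $\epsilon'_i=1$, so $\Omega_3(i,i)$ is allowed in the second theory too; when $\epsilon_i=0$ no such move is present in the first theory, so no condition arises (even if $\epsilon'_i=1$). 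This settles families (2)--(4).

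The only substantial point is family (1): each move $\Omega_1^{d_i}(i)$ of the first theory — present only when $d_i\ge 1$, in which case $d'_i\mid d_i$ forces $d'_i\ge 1$ as well — must be realized by moves of the second theory. Put $m=d_i/d'_i\in\mathbb Z_{\ge 1}$. I expect the crux to be the geometric observation, to be read off from Fig.~\ref{fig:reidemeister_move1}, that the local tangle inserted or deleted by $\Omega_1^{d_i}(i)$ is, up to diagram isotopy and at most moves $\Omega_2(i)$ (which belong to both theories), the concatenation along the strand of $m$ copies of the local tangle of $\Omega_1^{d'_i}(i)$. Granting this, $\Omega_1^{d_i}(i)$ is obtained by applying $\Omega_1^{d'_i}(i)$ once to each of these $m$ adjacent copies, a legal sequence in $\mathcal L_{\mathbf d',\mathbf\epsilon'}(S)$; in the classical case $i=0$ one chooses the under/over data of the $m$ copies so as to match the prescribed variant of $\Omega_1^{d_0}(0)$, which is possible because $\Omega_1^{d'_0}(0)$ admits all such variants. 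It is exactly at this step that the divisibility hypothesis $d'_i\mid d_i$ is used, and essentially nothing else of it.

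Once well-definedness of $p$ is established, it is automatically surjective, since $p([D])=[D]$ and every class of the target is represented by some diagram in $\mathcal D_k(S)$; its naturality — compatibility with the identity on $\mathcal D_k(S)$ and with further projections of the same kind — is immediate from the construction, as $p$ is induced by $\mathrm{id}_{\mathcal D_k(S)}$.
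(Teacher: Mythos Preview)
Your proof is correct and follows exactly the approach of the paper, which simply asserts in one line that ``the multi-flat moves of type $(\mathbf d,\mathbf\epsilon)$ are expressed by the moves of type $(\mathbf d',\mathbf\epsilon')$.'' You have unpacked this assertion move by move, including the only nontrivial case $\Omega_1^{d_i}(i)$, where the divisibility $d'_i\mid d_i$ is actually used; the paper leaves that verification to the reader (though the analogous reduction appears explicitly in its proof of the merging-map proposition).
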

\begin{proof}
    Indeed, the multi-flat moves of type $(\mathbf d, \mathbf\epsilon)$ are expressed by the moves of type $(\mathbf d', \mathbf\epsilon')$.
\end{proof}

Analogously, one defines a quotient map $p\colon \bar{\mathcal L}_{\mathbf d, \mathbf\epsilon}(S)\to\bar{\mathcal L}_{\mathbf d', \mathbf\epsilon'}(S)$.

In particular, for any tuples $\mathbf d\in\mathbb Z_{\ge 0}^{k+1}$ and $\mathbf\epsilon\in\{0,1\}^{k+1}$ there are quotient maps $p\colon \mathcal L_{\mathbf d, \mathbf\epsilon}(S)\to\mathcal L_{k}(S)$ and $p\colon \bar{\mathcal L}_{\mathbf d, \mathbf\epsilon}(S)\to\bar{\mathcal L}_{k}(S)$.

\subsection{Inclusion map}

\begin{proposition}
    Let $\mathbf d=(d_0,d_1,\dots,d_k)$, $\mathbf\epsilon=(\epsilon_0,\epsilon_1,\dots,\epsilon_k)$, $\mathbf d'=(d'_0,d'_1,\dots,d'_l)$, $\mathbf\epsilon'=(\epsilon'_0,\epsilon'_1,\dots,\epsilon'_l)$, and $\sigma=(\sigma_0,\dots,\sigma_k)\subset\mathbb Z_{\le 0}^{k+1}$ be such that
\begin{itemize}
    \item $k\le l$;
    \item $0\le\sigma_0<\sigma_1<\cdots<\sigma_k\le l$;
    \item for any $i=0,\dots,k$  $\epsilon'_{\sigma_i}=\epsilon_i$ and $d'_{\sigma_i}=d_i$.
\end{itemize}
Then the map $\iota_\sigma\colon \mathcal D_k(S)\to\mathcal D_l(S)$ which changes the type of each $i$-crossing in a diagram $D\in \mathcal D_k(S)$ to $\sigma_i$, $i=0,\dots,k$, induces a well-defined map $\iota_\sigma\colon \mathcal L_{\mathbf d, \mathbf\epsilon}(S)\to\mathcal L_{\mathbf d', \mathbf\epsilon'}(S)$.
\end{proposition}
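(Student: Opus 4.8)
The plan is to verify that $\iota_\sigma$ respects the two equivalence relations, i.e.\ that whenever diagrams $D_1,D_2\in\mathcal D_k(S)$ are related by a diagram isotopy or by one of the moves permitted in $\mathcal L_{\mathbf d,\mathbf\epsilon}(S)$, the diagrams $\iota_\sigma(D_1),\iota_\sigma(D_2)\in\mathcal D_l(S)$ are related by a diagram isotopy or by a move permitted in $\mathcal L_{\mathbf d',\mathbf\epsilon'}(S)$. Since $\iota_\sigma$ is defined crossing-by-crossing (it only changes the label $i$ of a crossing to $\sigma_i$, leaving the rest of the diagram unchanged, and, when $\sigma_i=0$, leaving its under/over data unchanged), and since all the moves are supported in a disk, it suffices to check this locally, one move at a time; diagram isotopies are obviously carried to diagram isotopies.

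First I would treat the first Reidemeister moves. The move $\Omega_1^{d_i}(i)$ involves only crossings of the single type $i$, so $\iota_\sigma$ turns it into $\Omega_1^{d_i}(\sigma_i)$; by hypothesis $d_i=d'_{\sigma_i}$, hence this is exactly $\Omega_1^{d'_{\sigma_i}}(\sigma_i)$, which is on the list of permitted moves for $\mathcal L_{\mathbf d',\mathbf\epsilon'}(S)$. Similarly $\Omega_2(i)$ becomes $\Omega_2(\sigma_i)$, and all second Reidemeister moves are permitted on the target side. For the third moves: a same-type move $\Omega_3(i,i)$ is permitted on the source side only when $\epsilon_i=1$, and $\iota_\sigma$ sends it to $\Omega_3(\sigma_i,\sigma_i)$, which is permitted on the target side because $\epsilon'_{\sigma_i}=\epsilon_i=1$; a mixed move $\Omega_3(i,j)$ with $i<j$ is sent to $\Omega_3(\sigma_i,\sigma_j)$, and since $\sigma$ is strictly increasing we have $\sigma_i<\sigma_j$, so this is again a mixed third Reidemeister move, which is always permitted on the target side.

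The one point that needs a little care is the behaviour at classical crossings ($i=0$). If $\sigma_0=0$, then $\iota_\sigma$ keeps such crossings classical and acts as the identity on their under/over data, so the clause ``all variants of the under/over structure'' in $\Omega_1^{d_0}(0)$, as well as the moves $\Omega_2(0)$ and $\Omega_3(0,0)$, transfer verbatim. If $\sigma_0>0$, then $\iota_\sigma$ forgets the under/over data at these crossings and makes them flat crossings of type $\sigma_0$; the classical moves $\Omega_1^{d_0}(0)$, $\Omega_2(0)$, $\Omega_3(0,0)$ then collapse to the corresponding flat moves $\Omega_1^{d'_{\sigma_0}}(\sigma_0)$, $\Omega_2(\sigma_0)$, $\Omega_3(\sigma_0,\sigma_0)$ (the last permitted since $\epsilon'_{\sigma_0}=\epsilon_0$), all of which are on the target list. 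Apart from this, the argument is pure bookkeeping, and I expect no real obstacle: the essential use of the hypotheses is that strict monotonicity (equivalently, injectivity) of $\sigma$ prevents a mixed move $\Omega_3(i,j)$, $i\ne j$, from degenerating into a same-type move $\Omega_3(\sigma_i,\sigma_i)$ that could be forbidden when $\epsilon'_{\sigma_i}=0$, while the equalities $d'_{\sigma_i}=d_i$ and $\epsilon'_{\sigma_i}=\epsilon_i$ are exactly what keep the relabelled $\Omega_1$-moves and same-type $\Omega_3$-moves permitted. The main thing to get right is this translation table between the two lists of moves, in particular the $i=0$ under/over bookkeeping.
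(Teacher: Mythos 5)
Your proof is correct and follows essentially the same route as the paper's, which simply records in one sentence the same translation table ($\Omega_1^{d_i}(i)\mapsto\Omega_1^{d_i}(\sigma_i)$, $\Omega_2(i)\mapsto\Omega_2(\sigma_i)$, $\Omega_3(i,j)\mapsto\Omega_3(\sigma_i,\sigma_j)$) and notes that the hypotheses on $\mathbf d',\mathbf\epsilon'$ make the images permitted moves. Your additional care about strict monotonicity of $\sigma$ and about the under/over data at type-$0$ crossings only spells out details the paper leaves implicit.
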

\begin{proof}
    Indeed, the inclusion map juxtaposes the move $\Omega_2(i)$ with $\Omega_2(\sigma_i)$, the move $\Omega_3(i,j)$ with $\Omega_3(\sigma_i,\sigma_j)$ and the move $\Omega_1^d(i)$ with $\Omega_1^d(\sigma_i)$, which are the multi-flat moves of type $(\mathbf d', \mathbf\epsilon')$ by the conditions on $\mathbf d', \mathbf\epsilon'$.
\end{proof}

Analogously, one defines an inclusion map $\iota_\sigma\colon \bar{\mathcal L}_{\mathbf d, \mathbf\epsilon}(S)\to\bar{\mathcal L}_{\mathbf d', \mathbf\epsilon'}(S)$ (or to $\mathcal L_{\mathbf d', \mathbf\epsilon'}(S)$ if $\sigma_0>0$).

In particular, for any sequence $\sigma=(\sigma_0,\dots,\sigma_k)$, $0\le\sigma_0<\sigma_1<\cdots<\sigma_k\le l$, we have inclusion maps $\iota_\sigma\colon \mathcal L_{k}(S)\to\mathcal L_{l}(S)$ and $\iota_\sigma\colon \bar{\mathcal L}_{k}(S)\to\bar{\mathcal L}_{l}(S)$.

\begin{example}\label{ex:2-flat-8-knot}
Consider an eight-knot $K\in\mathcal L_2$ (Fig.~\ref{fig:inclusion_map} left). The result of the inclusion map $\iota_\sigma\colon \mathcal L_{2}\to\mathcal L_{5}$ given by $\sigma=(2,3,5)$ applied to $K$ is shown in Fig.~\ref{fig:inclusion_map} right.
\begin{figure}[h]
\centering\includegraphics[width=0.15\textwidth]{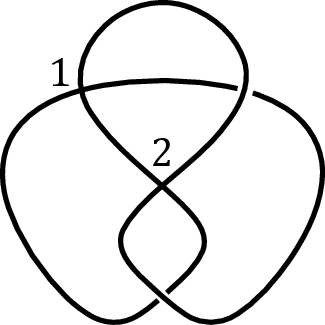}\qquad
\includegraphics[width=0.15\textwidth]{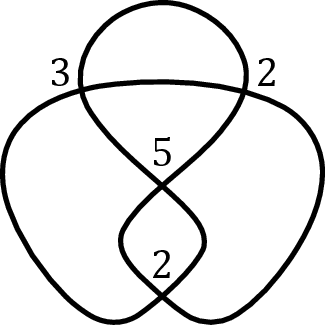}
\caption{A $2$-flat eight knot $K$ and its image under the inclusion map $\iota_{(2,3,5)}$}\label{fig:inclusion_map}
\end{figure}

\end{example}

\subsection{Merging map}

Let $k\in\mathbb N$ and $1\le l<k$. Consider the map $\mu_l$ which replaces the type of all the $i$-crossings of a diagram $D\in\mathcal D_k(S)$, $l<i\le k$, by $i-1$ (hence, the $l$-crossings and $(l+1)$-crossings become $l$-crossings).

\begin{proposition}
    The map $\mu_l$ induces a well-defined map $\mu_l\colon\mathcal L_{\mathbf d, \mathbf\epsilon}(S)\to\mathcal L_{\mathbf d', \mathbf\epsilon'}(S)$ where $\mathbf d'=(d'_0,d'_1,\dots,d'_{k-1})$, $\mathbf\epsilon'=(\epsilon'_0,\epsilon'_1,\dots,\epsilon'_{k-1})$ with
\[
d'_i=\left\{\begin{array}{lc}
    d_i, & i<l, \\
    gcd(d_l,d_{l+1}), & i=l,\\
    d_{i+1}, & i>l,
\end{array}\right.
\qquad
\epsilon'_i=\left\{\begin{array}{lc}
    \epsilon_i, & i<l, \\
    1, & i=l,\\
    \epsilon_{i+1}, & i>l.
\end{array}\right.
\]
\end{proposition}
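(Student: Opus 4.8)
The proof should follow the same template as the two preceding propositions: the merging map is defined on diagrams, so one only needs to check that it carries each defining move of the theory $\mathcal L_{\mathbf d,\mathbf\epsilon}(S)$ to a move (or composition of moves) allowed in $\mathcal L_{\mathbf d',\mathbf\epsilon'}(S)$. The plan is to go through the four families of moves in the definition and verify this crossing-type bookkeeping, paying special attention to the two crossing types $l$ and $l+1$ that get identified.

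**Key steps.** First, I would fix the index translation $\tau\colon\{0,\dots,k\}\to\{0,\dots,k-1\}$ given by $\tau(i)=i$ for $i\le l$, $\tau(i)=i-1$ for $i>l$, so that $\mu_l$ sends an $i$-crossing to a $\tau(i)$-crossing; note $\tau(l)=\tau(l+1)=l$ and $\tau$ is weakly monotone. Then, for the second Reidemeister moves: $\Omega_2(i)$ maps to $\Omega_2(\tau(i))$, which is always a legal move in any multi-flat theory, so nothing to check beyond the type. For the first Reidemeister moves: $\Omega_1^{d_i}(i)$ maps to $\Omega_1^{d_i}(\tau(i))$; for $i\neq l,l+1$ we have $\tau(i)$-crossings and $d'_{\tau(i)}=d_i$, so this is exactly a defining move; for $i\in\{l,l+1\}$ the image is $\Omega_1^{d_i}(l)$, and since $d'_l=\gcd(d_l,d_{l+1})$ divides $d_i$, Proposition (the quotient-map proposition) — applied in the single-type form, or just the observation that $\Omega_1^{d}(l)$ is a composition of $\Omega_1^{d'_l}(l)$'s when $d'_l\mid d$ — shows it is realized by the allowed first move. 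For the third Reidemeister moves: a mixed move $\Omega_3(i,j)$ with $i<j$ maps to $\Omega_3(\tau(i),\tau(j))$; if $\tau(i)<\tau(j)$ this is a mixed move of the target theory; the only way to get $\tau(i)=\tau(j)$ is $\{i,j\}=\{l,l+1\}$, and then the image is $\Omega_3(l,l)$, which is allowed because $\epsilon'_l=1$. Finally a same-type move $\Omega_3(i,i)$ (present when $\epsilon_i=1$) maps to $\Omega_3(\tau(i),\tau(i))$; for $i\neq l,l+1$ this needs $\epsilon'_{\tau(i)}=\epsilon_i=1$, which holds by definition of $\mathbf\epsilon'$; for $i\in\{l,l+1\}$ we land on $\Omega_3(l,l)$, again allowed since $\epsilon'_l=1$.

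**Main obstacle.** The genuinely delicate point is the first Reidemeister move when $d_l\neq d_{l+1}$: one must argue that $\Omega_1^{d_l}(l)$ and $\Omega_1^{d_{l+1}}(l)$ are both consequences of the single allowed move $\Omega_1^{d'_l}(l)$ with $d'_l=\gcd(d_l,d_{l+1})$. This is exactly the divisibility half of the quotient-map Proposition (a $\Omega_1^{d}$ move is obtained by iterating $\Omega_1^{d'}$ when $d'\mid d$, using $\Omega_2$ to cancel the intermediate extra crossings), so it can be invoked rather than reproved; I would state this invocation explicitly. Everything else is the routine type-relabelling check, and I would present it compactly as above rather than drawing every diagram, mirroring the terse style of the proofs of the inclusion and quotient propositions.
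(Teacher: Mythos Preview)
Your proposal is correct and follows exactly the paper's approach: check that each defining move of the source theory is carried by $\mu_l$ to an allowed move of the target theory, with the only nontrivial bookkeeping occurring at types $l$ and $l+1$. The paper's proof is even terser than yours---it singles out precisely the two cases you flagged as delicate (the first Reidemeister moves $\Omega_1^{d_l}(l)$ and $\Omega_1^{d_{l+1}}(l+1)$ becoming moves governed by $\gcd(d_l,d_{l+1})$, and the mixed move $\Omega_3(l,l+1)$ collapsing to $\Omega_3(l,l)$) and leaves the rest implicit.
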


\begin{proof}
    Indeed, the merging map $\mu_l$ converts the multi-flat moves of type $(\mathbf d, \mathbf\epsilon)$ to multi-flat moves of type $(\mathbf d', \mathbf\epsilon')$. For example, the moves $\Omega_1^{d_l}(l)$ and $\Omega_1^{d_{l+1}}(l+1)$ turn into moves $\Omega_1^{d_l}(l)$ and $\Omega_1^{d_{l+1}}(l)$ which are equivalent to the move $\Omega_1^{gcd(d_l,d_{l+1})}(l)$. The move $\Omega_3(l,l+1)$ turns into $\Omega_3(l,l)$.
\end{proof}

Analogously, one defines the merging map $\mu_0\colon {\mathcal L}_{\mathbf d, \mathbf\epsilon}(S)\to\bar{\mathcal L}_{\mathbf d', \mathbf\epsilon'}(S)$ which corresponds to the case $l=0$.

In particular, we have merging maps $\mu_l\colon \mathcal L_{k}(S)\to\mathcal L_{k-1}(S)$, $1\le l< k$, and $\mu_0\colon \mathcal L_{k}(S)\to\bar{\mathcal L}_{k-1}(S)$.

\begin{example}
Consider the $2$-flat eight-knot $K$ from Example~\ref{ex:2-flat-8-knot}. The results of merging of $K$ is shown in Fig.~\ref{fig:merging_map}. 
\begin{figure}[h]
\centering\includegraphics[width=0.15\textwidth]{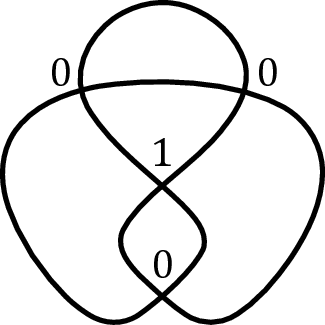}\qquad
\includegraphics[width=0.15\textwidth]{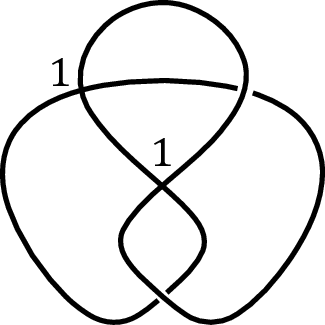}
\caption{The mergings $\mu_0(K)$ and $\mu_1(K)$ of the $2$-flat eight knot $K$}\label{fig:merging_map}
\end{figure}
    
\end{example}

\subsection{Covering map}

Let $f\colon S_1\to S_2$ a branched covering of degree $p$ and ramification indices $r_1,\dots, r_l$ at the ramification points. Consider a diagram $D\in\mathcal D_k(S_1)$ in general position with respect to $f$, i.e. the image $D'=f(D)$ has only double self-intersection points distinct from the crossings of $D$ and the branching points. Mark the double points in $D'$ as $(k+1)$-crossings. Denote the resulting diagram by $f_*(D)\in\mathcal D_{k+1}(S_2)$.

\begin{proposition}[Covering map]
    The map $D\mapsto f_*(D)$ induces a well-defined map 
\[
f_*\colon \mathcal L_{\mathbf d, \mathbf\epsilon}(S_1)\to \mathcal L_{\mathbf d', \mathbf\epsilon'}(S_2)
\]
where $
\mathbf d'=(d_0,\dots,d_k,d'_{k+1}),\quad \mathbf \epsilon'=(\epsilon_0,\dots,\epsilon_k,\epsilon'_{k+1})
$ with 
\[
d'_{k+1}=gcd(r_1-1,\dots,r_l-1),\quad
\epsilon'_{k+1}=\left\{\begin{array}{cl}
    0, & p=2, \\
    1, & p>2.
\end{array}\right.
\]
\end{proposition}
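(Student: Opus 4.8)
The plan is to verify three things: that $f_*(D)\in\mathcal D_{k+1}(S_2)$ is well defined for a diagram $D$ in general position, that the class of $f_*(D)$ in $\mathcal L_{\mathbf d',\mathbf\epsilon'}(S_2)$ does not depend on the chosen general-position representative of the isotopy class of $D$ in $S_1$, and that each allowed move of type $(\mathbf d,\mathbf\epsilon)$ is carried to a sequence of allowed moves of type $(\mathbf d',\mathbf\epsilon')$. The first point is immediate from the general-position hypothesis: $f(D)$ is a $4$-valent graph whose vertices are the crossings of $D$, of types $0,\dots,k$, together with the transverse double points of $f|_D$ that avoid those crossings and the branch values, which we mark as $(k+1)$-crossings. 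We record for later use that a $(k+1)$-crossing is produced only by two \emph{distinct} points of $D$ having the same image under $f$; a self-crossing already present in $D$ keeps its original type $i\le k$ under $f_*$, so the $\Omega_1$-type moves for the new crossings will be governed solely by the ramification of $f$.

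I would establish the isotopy-invariance first, as it is used afterwards. Two general-position representatives are connected by an isotopy $h_t$ of $S_1$, which I take generic with respect to $f$; a transversality argument then reduces $h_t$ to a finite concatenation of elementary events, across each of which $f(h_t(D))$ changes in one of the following ways. (a) Two local branches of the image become tangent and then transverse: this is a move $\Omega_2(k+1)$, allowed because $\Omega_2$ is permitted for every type. (b) A branch of the image crosses over a crossing of $f(D)$ of some type $i\le k$: the local picture involves the two strands of that crossing and the moving strand, with crossing types $i,k+1,k+1$, so this is a mixed third Reidemeister move, which is always allowed. (c) Three branches of the image pass through a common point: this is $\Omega_3(k+1,k+1)$; such an event requires a point of $S_2$ having three distinct preimages lying on smooth arcs of $D$, hence can occur only if $p\ge 3$. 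For $p=2$ it never happens, in agreement with $\epsilon'_{k+1}=0$; for $p>2$ the choice $\epsilon'_{k+1}=1$ makes it legal. (d) An arc of $D$ moves across a branch point $x_j$ of ramification index $r_j$: in the local model $f(z)=z^{r_j}$, dragging a small loop of the arc across $x_j$ creates (or destroys) a bundle of $r_j-1$ self-crossings of type $k+1$ arranged exactly as in the move $\Omega_1^{r_j-1}(k+1)$, up to moves $\Omega_2(k+1)$ and mixed third moves needed to slide other branches out of the way. Since $d'_{k+1}=\gcd(r_1-1,\dots,r_l-1)$ divides every $r_j-1$, the move $\Omega_1^{r_j-1}(k+1)$ is obtained from $\Omega_1^{d'_{k+1}}(k+1)$ and $\Omega_2(k+1)$ exactly as in the quotient-map argument above (apply $\Omega_1^{d'_{k+1}}(k+1)$ repeatedly $(r_j-1)/d'_{k+1}$ times), so it too is legal of type $(\mathbf d',\mathbf\epsilon')$.

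For the statement about moves, suppose $D_1$ and $D_2$ differ by a move $\Omega_2(i)$, $\Omega_3(i,j)$ or $\Omega_1^{d_i}(i)$ with $0\le i\le j\le k$, supported in a disk $U\subset S_1$. Using the isotopy-invariance just proved, first shrink $U$ and slide it to a position where $f|_U$ is a homeomorphism onto $f(U)$ and $f(U)\cap f(D\setminus U)=\emptyset$; such positions are generic since there are only finitely many branch points and the images of the other sheets form a $1$-complex. Then the move pushes forward verbatim to the same move performed inside $f(U)$, which is legal of type $(\mathbf d',\mathbf\epsilon')$ because $d'_i=d_i$ and $\epsilon'_i=\epsilon_i$ for all $i\le k$ and mixed third moves are always allowed. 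This accounts for all the generating moves of $\mathcal L_{\mathbf d,\mathbf\epsilon}(S_1)$.

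I expect the main obstacle to be item (d) and the transversality reduction: carefully setting up the local model at a branch point and checking that pushing an arc across a point of ramification index $r$ alters $f(D)$ by precisely a $\Omega_1^{r-1}(k+1)$-curl — which is what forces the value $d'_{k+1}=\gcd(r_1-1,\dots,r_l-1)$ — together with the (standard but delicate) claim that a generic isotopy of $D$ in $S_1$ decomposes into exactly the events (a)--(d), the branch points contributing only events of type (d).
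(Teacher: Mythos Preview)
Your proof is correct and follows essentially the same route as the paper: decompose a generic isotopy in $S_1$ into the elementary events (a)--(d), identify them with $\Omega_2(k+1)$, $\Omega_3(i,k+1)$, $\Omega_3(k+1,k+1)$ (only when $p\ge 3$), and $\Omega_1^{r_j-1}(k+1)$ respectively, and then observe that a Reidemeister move localized in a small disk pushes forward to the same move. You are in fact more careful than the paper on two points: you spell out why $\Omega_1^{r_j-1}(k+1)$ follows from $\Omega_1^{d'_{k+1}}(k+1)$ via the divisibility $d'_{k+1}\mid r_j-1$, and you explicitly clear the other sheets out of $f(U)$ (using the isotopy-invariance already established) before transporting the Reidemeister move, whereas the paper simply asserts that the move goes over unchanged in a small disk.
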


In particular, we have covering maps $f_*\colon \mathcal L_{k}(S_1)\to \mathcal L_{k+1}(S_2)$.

\begin{proof}
    If diagrams $D_1,D_2\in \mathcal D_k(S_1)$ are connected by a multi-flat Reidemeister move, we can assume it occurs in a small disk $B\subset S_1$.
    Then $f_*(D_1)$ and $f_*(D_2)$ are connected by a multi-flat Reidemeister move of the same type as the one between $D_1$ and $D_2$.

    An isotopy between diagrams $D_1$ and $D_2$ in $S_1$ can be split into sequence of local isotopies. After applying the map $f$ to a local isotopy, we can get either a second Reidemeister move $\Omega_2(k+1)$ or a third Reidemeister move $\Omega_3(i,k+1)$, $0\le i\le k$, or a third Reidemeister move $\Omega_3(k+1,k+1)$ (when arcs in three layers of the covering form a triple point after projection to $S_2$; this case requires $p\ge 3$). A move of an arc over a ramification point in $S_1$ of index $r$ generates a first Reidemeister move $\Omega_1^{r-1}(k+1)$ in $S_2$ (Fig.~\ref{fig:projection_moves}).

\begin{figure}[h]
\centering\includegraphics[width=0.7\textwidth]{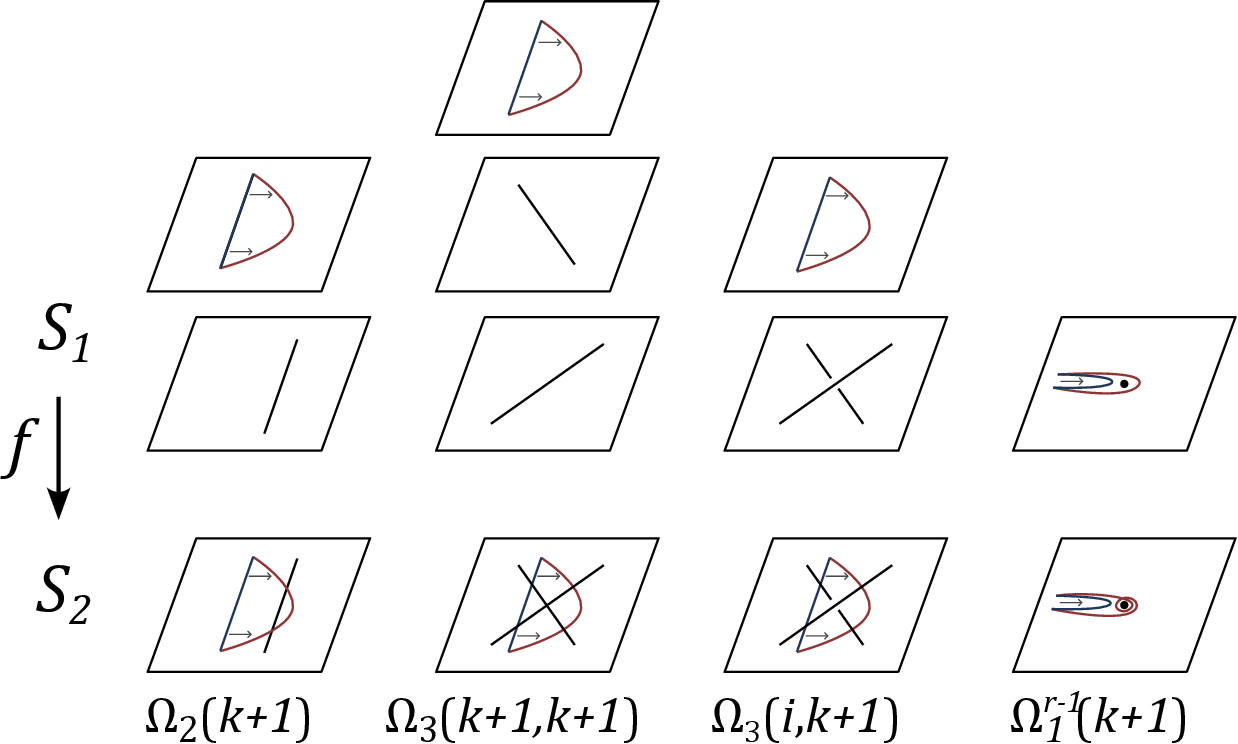}
\caption{Moves induced by projection of an isotopy}\label{fig:projection_moves}
\end{figure}

    Thus, for any equivalent diagrams $D_1$ and $D_2$ in $\mathcal L_{\mathbf d, \mathbf \epsilon}(S_1)$, the diagrams $f_*(D_1)$ and $f_*(D_2)$ are equivalent in $\mathcal L_{\mathbf d', \mathbf \epsilon'}(S_2)$.
\end{proof}

\begin{example}
    Consider a $0$-flat knot $K$ in an annulus $S$ (Fig.~\ref{fig:mf_covering_map} left). Let $f\colon S\to S$ be the regular $2$-fold covering. Then $f_*(K)\in \mathcal L_1(S)$ is a $1$-flat knot with one classical and two flat crossings (Fig.~\ref{fig:mf_covering_map} right).
\begin{figure}[h]
\centering\includegraphics[width=0.4\textwidth]{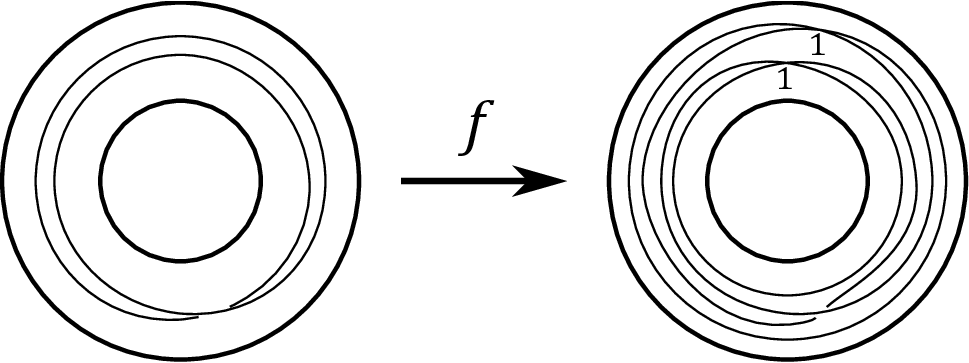}
\caption{The covering map on a $0$-flat knot}\label{fig:mf_covering_map}
\end{figure}
\end{example}

We call a link $L\in \mathcal L_{\mathbf d, \mathbf\epsilon}(S)$ a \emph{classical link} if there is a diagram $D$ of $L$ which has only classical crossings and is contained in a disk $B\subset S$.

\begin{proposition}
    For a classical link $L$ in the surface $S_1$, its image $f_*(L)$ under any covering map $f\colon S_1\to S_2$ is a classical link in $S_2$. 
\end{proposition}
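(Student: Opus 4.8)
The plan is to apply the definition of ``classical link'' directly: I must exhibit a diagram of $f_*(L)$ that has only classical crossings and lies in a disk of $S_2$. By hypothesis there is a diagram $D$ of $L$ with only classical crossings contained in a disk $B\subset S_1$, so the whole argument reduces to moving $D$ into a favourable position before applying $f_*$.

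First I would shrink the disk. The branched covering $f$ has only finitely many ramification points, and away from them $f$ is an honest covering map, so every non-ramification point of $S_1$ has an evenly covered neighbourhood. Since an embedded disk in a surface can be contracted, by an isotopy of the surface, into an arbitrarily small sub-disk avoiding a prescribed finite set, I isotope $D$ inside $S_1$ into a sub-disk $B'\subset B$ so small that $B'$ contains no ramification point and is contained in a single sheet of an evenly covered open set. Then $f|_{B'}\colon B'\to f(B')$ is a homeomorphism onto a disk $f(B')\subset S_2$. This isotopy is a diagram isotopy in $\mathcal L_{\mathbf d,\mathbf\epsilon}(S_1)$: it changes neither the number nor the types of crossings, so the resulting diagram $D'\subset B'$ still has only classical crossings and represents the same class as $D$.

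Next I apply $f_*$ to $D'$. Because $f|_{B'}$ is injective and $B'$ misses the branch locus, the image $f(D')$ has no self-intersections other than the images of the crossings of $D'$; hence $D'$ is in general position with respect to $f$, no new $(k+1)$-crossings are created, and $f_*(D')=f(D')$ is simply a homeomorphic copy of $D'$ sitting inside the disk $f(B')\subset S_2$ with only classical crossings. By the Covering map proposition, $f_*$ is well defined on equivalence classes, so $f_*(L)$ is represented by $f_*(D')$, which witnesses that $f_*(L)$ is a classical link in $S_2$.

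The only step needing care is the contraction of $B$ into a suitable $B'$ — one must be sure such a sub-disk can be chosen simultaneously evenly covered and disjoint from the finitely many ramification points — but this is a standard fact about embedded disks in surfaces, so I do not expect a real obstacle; everything else is an immediate consequence of the already established well-definedness of $f_*$ and of $f$ being a local homeomorphism away from its branch points.
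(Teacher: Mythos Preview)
Your argument is correct and is exactly the paper's approach: the paper's proof is the one-line assertion that one may assume $f|_B$ is a homeomorphism, whence $f_*(D)\subset f(B)$ is classical, and you have simply supplied the details of that assertion (shrinking $B$ by isotopy to an evenly covered sub-disk avoiding the ramification points). There is no substantive difference.
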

\begin{proof}
    Indeed, we can suppose that the restriction $f|_B$ is a homeomorphism. Hence, $f_*(D)\subset f(B)$ is a classical diagram.
\end{proof}

\subsection{Picture-valued Jones polynomial}

Denote $A=\mathbb Z[a,a^{-1}]$. Given tuples $\mathbf d=(d_0,d_1,\dots,d_k)$, $\mathbf\epsilon=(\epsilon_0,\epsilon_1,\dots,\epsilon_k)$, define the \emph{skein module} $\mathcal S_{\mathbf d, \mathbf\epsilon}(S)$ as the quotient $A$-module of the free module generated by the set of isotopy classes of diagrams $\mathcal D_{k}(S)$ modulo the relations
\begin{itemize}
    \item $\skcrossr=a\cdot\skcrv+a^{-1}\cdot\skcrh$;
    \item $L\sqcup\bigcirc=(-a^2-a^{-2})L$;
    \item Reidemeister moves of $\mathcal L_{\mathbf d, \mathbf\epsilon}(S)$ on $i$-crossings, $i>0$.
\end{itemize}

\begin{proposition}
Let $\mathbf d'=(d_1,\dots,d_k)$, $\mathbf\epsilon'=(\epsilon_1,\dots,\epsilon_k)$.
    There are isomorphism
\[
\mathcal S_{\mathbf d, \mathbf\epsilon}(S)\simeq A[\bar{\mathcal L}_{\mathbf d', \mathbf\epsilon'}(S)]/(L\sqcup\bigcirc=(-a^2-a^{-2})L)\simeq A[\bar{\mathcal L}^{red}_{\mathbf d', \mathbf\epsilon'}(S)]
\]
induced by the inclusion map $\iota_\sigma\colon\bar{\mathcal L}_{\mathbf d', \mathbf\epsilon'}(S)\to\mathcal L_{\mathbf d, \mathbf\epsilon}(S)$ where 
$\sigma=(1,\dots, k)$, and $\bar{\mathcal L}^{red}_{\mathbf d', \mathbf\epsilon'}(S)\subset \bar{\mathcal L}_{\mathbf d', \mathbf\epsilon'}(S)$ is the set of multi-flat links without trivial components.
\end{proposition}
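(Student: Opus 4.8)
The plan is to prove the two isomorphisms in turn. Write $M_2:=A[\bar{\mathcal L}_{\mathbf d',\mathbf\epsilon'}(S)]/(L\sqcup\bigcirc=(-a^2-a^{-2})L)$ and $M_3:=A[\bar{\mathcal L}^{red}_{\mathbf d',\mathbf\epsilon'}(S)]$. The isomorphism $\mathcal S_{\mathbf d,\mathbf\epsilon}(S)\simeq M_2$ carries all the topological content and should follow from the standard Kauffman-bracket reduction together with the Inclusion Proposition; the isomorphism $M_2\simeq M_3$ is pure bookkeeping of trivial components.

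For $\mathcal S_{\mathbf d,\mathbf\epsilon}(S)\simeq M_2$ I would first introduce the homomorphism $\Phi\colon M_2\to\mathcal S_{\mathbf d,\mathbf\epsilon}(S)$ induced by $[\bar D]\mapsto[\iota_\sigma(\bar D)]$. It is well defined because, with $\sigma=(1,\dots,k)$ and under the matching $d'_j=d_{j+1}$, $\epsilon'_j=\epsilon_{j+1}$, the Inclusion Proposition sends the generating moves of $\bar{\mathcal L}_{\mathbf d',\mathbf\epsilon'}(S)$ exactly to the moves $\Omega_1^{d_i}(i)$, $\Omega_2(i)$, $\Omega_3(i,j)$ (including $\Omega_3(i,i)$ when $\epsilon_i=1$) on crossings of types $i\ge 1$, and these moves together with the circle relation are among the defining relations of $\mathcal S_{\mathbf d,\mathbf\epsilon}(S)$. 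Surjectivity of $\Phi$ is the usual reduction: resolving each of the finitely many classical crossings of a diagram $D\in\mathcal D_k(S)$ by the Kauffman skein relations expresses $[D]$ as an $A$-linear combination of classes of diagrams without classical crossings, and each such diagram is $\iota_\sigma$ of a unique flat diagram in $\bar{\mathcal D}_{k-1}(S)$. For injectivity I would exhibit a left inverse $\Psi$: on the free module $A[\mathcal D_k(S)]$ set $\Psi(D)=\sum_s c_s\,[\iota_\sigma^{-1}(D_s)]\in M_2$, where $\sum_s c_s D_s$ is the Kauffman state sum obtained by resolving every classical crossing of $D$ (with the coefficients $a^{\pm1}$ dictated by its sign), so that each $D_s$ is flat. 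Then I would check that $\Psi$ annihilates the three families of defining relations of $\mathcal S_{\mathbf d,\mathbf\epsilon}(S)$: the skein relations because $\Psi$ resolves all classical crossings anyway; the relation $L\sqcup\bigcirc=(-a^2-a^{-2})L$ because a disjoint crossingless circle survives in every state and is absorbed by the relation defining $M_2$; and a flat move on an $i$-crossing with $i\ge1$ because it may be assumed to occur in a disk disjoint from all classical crossings, so it acts on each $D_s$ separately, where it is precisely a generating move of $\bar{\mathcal L}_{\mathbf d',\mathbf\epsilon'}(S)$. Since $\Psi\circ\Phi=\mathrm{id}_{M_2}$ holds trivially on generators (a flat diagram is its own state sum) and hence on $M_2$ by linearity, $\Phi$ is surjective with a left inverse, so it is an isomorphism with inverse $\Psi$.

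For $M_2\simeq M_3$, every $L\in\bar{\mathcal L}_{\mathbf d',\mathbf\epsilon'}(S)$ arises from a unique reduced link $L^{red}$ by adjoining a well-defined number $m(L)\ge0$ of split unknotted components, the key point being the (standard) invariance of the count of trivial components and of the underlying reduced link under the Reidemeister moves of the theory. Then $L\mapsto(-a^2-a^{-2})^{m(L)}L^{red}$ kills $L\sqcup\bigcirc-(-a^2-a^{-2})L$ (as $m(L\sqcup\bigcirc)=m(L)+1$ and $(L\sqcup\bigcirc)^{red}=L^{red}$), hence descends to a homomorphism $M_2\to M_3$, and it is inverse to the map $M_3\to M_2$ induced by the inclusion $\bar{\mathcal L}^{red}_{\mathbf d',\mathbf\epsilon'}(S)\subset\bar{\mathcal L}_{\mathbf d',\mathbf\epsilon'}(S)$; iterating the circle relation gives $[L]=(-a^2-a^{-2})^{m(L)}[L^{red}]$ in $M_2$, which makes the two composites the identity.

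The hard part will be the well-definedness of $\Psi$: one must verify that the state sum is independent of the order in which the classical crossings are resolved (it is, since each crossing contributes an independent two-term factor) and that flat moves on high-type crossings genuinely localize away from all classical crossings, so that they act cleanly on the flat states $D_s$; the remaining verifications are routine. A minor point to keep in mind is that the defining relations of $\mathcal S_{\mathbf d,\mathbf\epsilon}(S)$ must be understood to include the mirror skein relation $\skcrossl=a^{-1}\cdot\skcrv+a\cdot\skcrh$ as well, for otherwise negative classical crossings could not be resolved and $\Phi$ would fail to be surjective.
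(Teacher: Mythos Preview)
Your proof is correct and follows essentially the same route as the paper: the paper also uses the Kauffman state sum $\langle D\rangle=\sum_s a^{\alpha(s)-\beta(s)}D_s$ to identify $\mathcal S_{\mathbf d,\mathbf\epsilon}(S)$ with the submodule $\mathcal S^0_{\mathbf d,\mathbf\epsilon}(S)$ spanned by diagrams without classical crossings (this is exactly your pair $\Phi,\Psi$), and then identifies $\mathcal S^0_{\mathbf d,\mathbf\epsilon}(S)$ with $M_2$ via $\iota_\sigma$. You supply considerably more detail than the paper---in particular for the second isomorphism $M_2\simeq M_3$, which the paper leaves implicit---and your closing caveat about the mirror skein relation is harmless but unnecessary, since the single Kauffman relation applies to every classical crossing (the $A$- and $B$-smoothings are intrinsic to the crossing, not to a chosen planar orientation).
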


\begin{proof}
    Consider the submodule $\mathcal S^0_{\mathbf d, \mathbf\epsilon}(S)\subset \mathcal S_{\mathbf d, \mathbf\epsilon}(S)$ which is spanned by diagrams without classical crossings. An isomorphism between between $\mathcal S^0_{\mathbf d, \mathbf\epsilon}(S)$ and $\mathcal S_{\mathbf d, \mathbf\epsilon}(S)$ is established by the inclusion and the Kauffman bracket formula $\langle\cdot\rangle\colon\mathcal S_{\mathbf d, \mathbf\epsilon}(S)\to \mathcal S^0_{\mathbf d, \mathbf\epsilon}(S)$
\begin{equation*}
    \langle D\rangle =\sum_{s} a^{\alpha(s)-\beta(s)} D_s.
\end{equation*}
The inclusion map $\iota_\sigma$ identifies $A[\bar{\mathcal L}_{\mathbf d', \mathbf\epsilon'}(S)]/(L\sqcup\bigcirc=(-a^2-a^{-2})L)$ with $\mathcal S^0_{\mathbf d, \mathbf\epsilon}(S)$.
\end{proof}

For a diagram $D\in\mathcal D_k(S)$ denote its class in $\mathcal S_{\mathbf d, \mathbf\epsilon}(S)$ by $\langle D\rangle$.

\begin{proposition}
    The formula $X(D)=(-a)^{-3w(D)}\langle D\rangle$  induces a well-defined map
    $$X\colon\mathcal L_{\mathbf d, \mathbf\epsilon}(S)\to A[\bar{\mathcal L}^{red}_{\mathbf d', \mathbf\epsilon'}(S)].$$ Here $w(D)$ is the writhe of the diagram $D$ (the sum of the signs of the classical crossings).
\end{proposition}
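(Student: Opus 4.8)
The plan is to verify that $X(D)=(-a)^{-3w(D)}\langle D\rangle$ is invariant under each of the Reidemeister moves listed in the definition of $\mathcal L_{\mathbf d,\mathbf\epsilon}(S)$, exactly as in the classical case. The target module $A[\bar{\mathcal L}^{red}_{\mathbf d',\mathbf\epsilon'}(S)]$ has already been identified with the skein module $\mathcal S_{\mathbf d,\mathbf\epsilon}(S)$ by the preceding proposition, so it suffices to check that $(-a)^{-3w(D)}\langle D\rangle$ is unchanged in $\mathcal S_{\mathbf d,\mathbf\epsilon}(S)$ when $D$ is altered by a move of $\mathcal L_{\mathbf d,\mathbf\epsilon}(S)$.

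First I would dispose of the moves involving $i$-crossings with $i>0$: by construction, $\mathcal S_{\mathbf d,\mathbf\epsilon}(S)$ already quotients by all Reidemeister moves of $\mathcal L_{\mathbf d,\mathbf\epsilon}(S)$ on such crossings, and these moves leave the writhe $w(D)$ (which counts only classical crossings) untouched, so both $\langle D\rangle$ and the normalizing factor are literally unchanged. Next, the classical second Reidemeister move $\Omega_2(0)$ and the classical third move $\Omega_3(0,0)$ are handled by the standard Kauffman-bracket computation: expanding the skein relation $\skcrossr=a\cdot\skcrv+a^{-1}\cdot\skcrh$ at the crossings in the move, together with the loop relation $L\sqcup\bigcirc=(-a^2-a^{-2})L$, shows $\langle D\rangle$ is invariant, and since $w(D)$ is unchanged by $\Omega_2(0)$ and $\Omega_3(0,0)$, so is $X(D)$. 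The mixed third moves $\Omega_3(0,j)$ involve classical crossings, but expanding the skein relation at the two classical crossings reduces each term to a diagram related by flat moves on $j$-crossings, which are already relations in $\mathcal S_{\mathbf d,\mathbf\epsilon}(S)$; the writhe is again unchanged since the two classical crossings in an $\Omega_3$ move have opposite contributions that are preserved.

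The main point — and the only place the $(-a)^{-3w(D)}$ factor does work — is the first Reidemeister move $\Omega_1^{d_0}(0)$ on classical crossings. Here the standard computation gives $\langle D'\rangle=(-a)^{\pm 3}\langle D\rangle$ for a single curl, so a $d_0$-fold iterated curl contributes $(-a)^{\pm 3 d_0}$ to the bracket; meanwhile the writhe changes by $\pm d_0$, so the factor $(-a)^{-3w}$ changes by $(-a)^{\mp 3 d_0}$, and the two cancel. The hard part is checking that the move $\Omega_1^{d_0}(0)$ as drawn (Fig.~\ref{fig:reidemeister_move1} with all under/over variants allowed) really does multiply the bracket by $(-a)^{\pm 3d_0}$ regardless of the chosen crossing signs: one must confirm that the net writhe contribution of the $d$-curl is $\pm d$ and that the repeated application of the single-curl bracket identity composes correctly through the intermediate diagrams, using $\Omega_2(0)$-invariance to simplify. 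Once this is established, every generating move of $\mathcal L_{\mathbf d,\mathbf\epsilon}(S)$ preserves $X$, so $X$ descends to a well-defined map on equivalence classes, as claimed.
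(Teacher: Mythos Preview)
Your approach is the same as the paper's (which is a two-line appeal to the standard Kauffman-bracket argument), and your expanded checklist of moves is sound. Two small corrections, though.

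First, the move $\Omega_3(0,j)$ carries \emph{one} classical crossing and \emph{two} $j$-crossings, not two classical crossings: this is visible from the biquandle relations in the third group, each of which involves one $\ast^i$ and two $\ast^j$ operations. So you resolve the single classical crossing; each smoothing produces diagrams on the two sides that differ by an $\Omega_2(j)$ move, which is already a relation in $\mathcal S_{\mathbf d,\mathbf\epsilon}(S)$. Relatedly, the writhe is preserved under any third Reidemeister move simply because the same crossings, with the same signs, appear on both sides---there is no cancellation of ``opposite contributions'' involved.

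Second, in $\Omega_1^{d_0}(0)$ the definition allows each of the $d_0$ classical crossings to have an independently chosen sign, so the net writhe change is $\sum_c \mathrm{sgn}(c)$, not necessarily $\pm d_0$. But the bracket changes by $(-a)^{3\sum_c \mathrm{sgn}(c)}$ (iterating the single-curl identity $\langle D'\rangle=(-a)^{3\,\mathrm{sgn}(c)}\langle D\rangle$, using $\Omega_2(0)$-invariance between steps exactly as you say), so the normalizing factor still cancels it. With these adjustments your argument is complete.
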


\begin{proof}
    The standard arguments for the Kauffman bracket implies that the bracket $D\mapsto\langle D\rangle$ is invariant is invariant under classical second and third Reidemeister moves. A first Reidemeister move which creates a crossing $c$, multiplies the bracket by $(-a)^{3sgn(c)}$, hence, $X(D)$ is an invariant.
\end{proof}

The invariant $X(D)$ is called the \emph{(picture-valued) Jones polynomial} of the multi-flat link $D$.

In particular, we have Jones polynomial $X\colon\mathcal L_{k}(S)\to A[\bar{\mathcal L}^{red}_{k-1}(S)]$.

\begin{example}
    Consider the $2$-flat eight knot $K$ (Fig.~\ref{fig:inclusion_map} left). Its bracket $\langle K\rangle$ is shown in Fig.~\ref{fig:eight_knot_mf_bracket}. Since $w(K)=0$, we have $X(K)=\langle K\rangle$.

\begin{figure}[h]
\centering\includegraphics[width=0.7\textwidth]{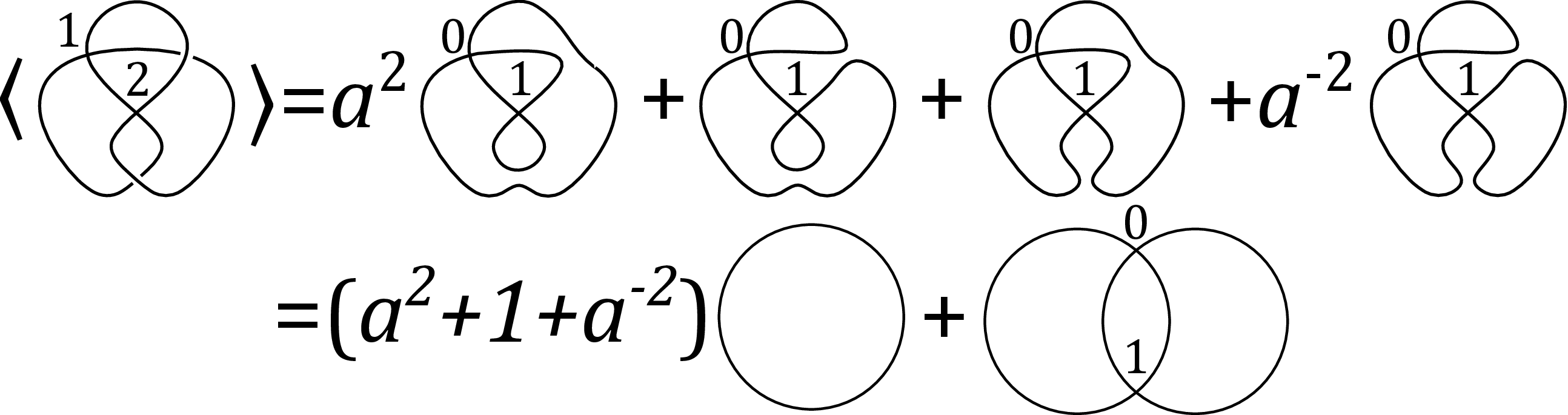}
\caption{The bracket of the $2$-flat eight knot $K$}\label{fig:eight_knot_mf_bracket}
\end{figure}
\end{example}

By specifying $a=-1$, one defines the \emph{flat skein module} $\bar{\mathcal S}_{\mathbf d, \mathbf\epsilon}(S)$ as the quotient of the free $\mathbb Z$-module generated by the set $\bar{\mathcal L}_{\mathbf d, \mathbf\epsilon}(S)$ modulo the relations
\begin{itemize}
    \item $\skcrf=-\skcrv-\skcrh$;
    \item $L\sqcup\bigcirc=-2L$.    
\end{itemize}

\begin{proposition}
Let $\mathbf d'=(d_1,\dots,d_k)$, $\mathbf\epsilon'=(\epsilon_1,\dots,\epsilon_k)$.

\begin{enumerate}
    \item  There are isomorphism
\[
\bar{\mathcal S}_{\mathbf d, \mathbf\epsilon}(S)\simeq \mathbb Z[\bar{\mathcal L}_{\mathbf d', \mathbf\epsilon'}(S)]/(L\sqcup\bigcirc=-2L)\simeq\mathbb Z[\bar{\mathcal L}^{red}_{\mathbf d', \mathbf\epsilon'}(S)];
\]
\item 
    The natural projection from $\bar{\mathcal D}_k(S)$ to $\bar{\mathcal S}_{\mathbf d, \mathbf\epsilon}(S)$ induces a well-defined map 
\[
X\colon\bar{\mathcal L}_{\mathbf d, \mathbf\epsilon}(S)\to\mathbb Z[\bar{\mathcal L}^{red}_{\mathbf d', \mathbf\epsilon'}(S)].
\]
\end{enumerate}
\end{proposition}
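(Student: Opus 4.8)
The plan is to repeat, after setting $a=-1$, the two-step argument used above for $\mathcal S_{\mathbf d,\mathbf\epsilon}(S)$, observing in addition that the writhe normalization becomes trivial at $a=-1$, so that part (2) requires no separate work.

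For part (1), first I would isolate the submodule $\bar{\mathcal S}^{0}_{\mathbf d,\mathbf\epsilon}(S)\subset\bar{\mathcal S}_{\mathbf d,\mathbf\epsilon}(S)$ spanned by the classes of flat diagrams without $0$-crossings and show that the inclusion $\bar{\mathcal S}^{0}_{\mathbf d,\mathbf\epsilon}(S)\hookrightarrow\bar{\mathcal S}_{\mathbf d,\mathbf\epsilon}(S)$ is an isomorphism. Surjectivity is immediate: applying $\skcrf=-\skcrv-\skcrh$ to each of the $c_0$ zero-type crossings of a representative diagram $D$ rewrites its class as $(-1)^{c_0}\sum_s D_s$, a combination of classes of $0$-crossing-free diagrams, the sum being taken over the $2^{c_0}$ resolutions $s$. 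For injectivity I would construct the inverse as the flat Kauffman bracket, the $a=-1$ specialization $\langle D\rangle=(-1)^{c_0}\sum_s D_s$ of $\langle D\rangle=\sum_s a^{\alpha(s)-\beta(s)}D_s$, and check that it respects the two defining relations of $\bar{\mathcal S}_{\mathbf d,\mathbf\epsilon}(S)$ as well as the identification of Reidemeister-equivalent diagrams. The circle relation and the relation $\skcrf=-\skcrv-\skcrh$ follow by the usual state bookkeeping; invariance under $\Omega_2(0)$ and $\Omega_3(0,0)$ is the standard Kauffman-bracket computation; invariance under the mixed moves $\Omega_3(0,j)$ follows by resolving the $0$-crossing and using $\Omega_2(j)$; and invariance under $\Omega_1^{d_0}(0)$ holds because at $a=-1$ each elementary kink resolves trivially, $\langle\text{kink}\rangle=-\langle L\sqcup\bigcirc\rangle-\langle L\rangle=2\langle L\rangle-\langle L\rangle=\langle L\rangle$. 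Since $\langle\cdot\rangle$ is the identity on $\bar{\mathcal S}^0_{\mathbf d,\mathbf\epsilon}(S)$ and its composition with the inclusion is the identity on $\bar{\mathcal S}_{\mathbf d,\mathbf\epsilon}(S)$ (the skein relation being imposed there), the inclusion is an isomorphism.

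Next I would identify $\bar{\mathcal S}^{0}_{\mathbf d,\mathbf\epsilon}(S)$ with $\mathbb Z[\bar{\mathcal L}_{\mathbf d',\mathbf\epsilon'}(S)]/(L\sqcup\bigcirc=-2L)$: a $0$-crossing-free flat $k$-flat diagram uses only crossings of types $1,\dots,k$, and the inclusion map $\iota_\sigma$ with $\sigma=(1,2,\dots,k)$ is well defined because $\mathbf d',\mathbf\epsilon'$ are exactly $\mathbf d,\mathbf\epsilon$ with the $0$-th entry deleted (so $d_{\sigma_i}=d'_i$, $\epsilon_{\sigma_i}=\epsilon'_i$, which is the hypothesis of the inclusion proposition); $\iota_\sigma$ gives a bijection between $\bar{\mathcal L}_{\mathbf d',\mathbf\epsilon'}(S)$ and the set of $0$-crossing-free flat diagrams modulo the Reidemeister moves among types $1,\dots,k$, hence the claimed isomorphism after quotienting both sides by the circle relation. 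The second isomorphism is routine: a flat multi-flat link is uniquely a disjoint union of a link with no trivial components and $n\ge 0$ trivial circles, and removing those circles via $L\sqcup\bigcirc=-2L$ multiplies by the unit $(-2)^n$, so the quotient module is free on $\bar{\mathcal L}^{red}_{\mathbf d',\mathbf\epsilon'}(S)$.

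For part (2) there is essentially nothing to do: at $a=-1$ the factor $(-a)^{-3w(D)}$ equals $1$, and the natural projection $\bar{\mathcal D}_k(S)\to\bar{\mathcal S}_{\mathbf d,\mathbf\epsilon}(S)$ factors through $\bar{\mathcal L}_{\mathbf d,\mathbf\epsilon}(S)$ by the very definition of $\bar{\mathcal S}_{\mathbf d,\mathbf\epsilon}(S)$, whose generating set is $\bar{\mathcal L}_{\mathbf d,\mathbf\epsilon}(S)$; composing with the isomorphism of part (1) produces the desired map $X\colon\bar{\mathcal L}_{\mathbf d,\mathbf\epsilon}(S)\to\mathbb Z[\bar{\mathcal L}^{red}_{\mathbf d',\mathbf\epsilon'}(S)]$. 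The one point I expect to require care is the well-definedness of the inverse bracket in part (1), specifically its invariance under $\Omega_1^{d_0}(0)$ and under the mixed third moves $\Omega_3(0,j)$ — this is precisely where the specialization $a=-1$ is genuinely used — while the rest is the same bookkeeping as in the $\mathcal S_{\mathbf d,\mathbf\epsilon}(S)$ case together with the elementary splitting-off of trivial components.
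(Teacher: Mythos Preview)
Your proposal is correct and matches the paper's intended argument: the paper gives no separate proof for this proposition, merely introducing it with ``By specifying $a=-1$'', so the proof is exactly the specialization of the earlier arguments for $\mathcal S_{\mathbf d,\mathbf\epsilon}(S)$ and for $X(D)=(-a)^{-3w(D)}\langle D\rangle$, with the additional observation (which you make explicitly) that at $a=-1$ the kink resolves to $\langle L\rangle$ so that $\Omega_1^{d_0}(0)$-invariance holds without any writhe correction. Your write-up is in fact more detailed than what the paper provides.
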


In particular, we have a flat Jones polynomial $X\colon\bar{\mathcal L}_{k}(S)\to \mathbb Z[\bar{\mathcal L}^{red}_{k-1}(S)]$.

\subsection{Biquandle}

\begin{definition}
   A \emph{$k$-flat biquandle} is a set $X$ with $k+2$ binary operations $\oast^0, \uast^0, \ast^1,\dots, \ast^k$ such that
\begin{enumerate}
    \item the operations $\oast^0, \uast^0$ define a biquandle structure on $X$, i.e.~\cite{EN}
    \begin{enumerate}
        \item $x\oast^0 x= x\uast^0 x$ for any $x\in X$;
        \item the maps $\cdot\oast^0 y$, $\cdot\uast^0 y\colon X\to X$ are bijections for any $y$; the map $S_0(x,y)=(x\oast^0 y, y\uast^0 x)$ is a bijection on $X\times X$;
        \item for any $x,y,z\in X$
        \begin{gather*}
            (x\oast^0 y)\oast^0(z\oast^0 y)=(x\oast^0 z)\oast^0(y\uast^0 z),\\
            (x\uast^0 y)\oast^0(z\uast^0 y)=(x\oast^0 z)\uast^0(y\oast^0 z),\\
            (x\uast^0 y)\uast^0(z\uast^0 y)=(x\uast^0 z)\uast^0(y\oast^0 z);
        \end{gather*}
    \end{enumerate}
   \item the operation $\ast^i$, $1\le i\le k$, defines a flat biquandle structure on $X$, i.e. 
    \begin{enumerate}
        \item the map $\cdot\ast^i y\colon X\to X$ is a bijection for any $y$; the map $S_i(x,y)=(x\ast^i y, y\ast^i x)$ is a bijection on $X\times X$;
        \item for any $x,y,z\in X$ we have $(x\ast^i y)\ast^i(z\ast^i y)=(x\ast^i z)\ast^i(y\ast^i z)$;
    \end{enumerate}
    \item for any $0\le i<j\le k$ and any $x,y,z\in X$ we have
        \begin{gather*}
            (x\oast^i y)\ast^j(z\ast^j y)=(x\ast^j z)\oast^i(y\ast^j z),\\
            (x\uast^i y)\ast^j(z\ast^j y)=(x\ast^j z)\uast^i(y\ast^j z),\\
            (x\ast^j y)\ast^j(z\oast^i y)=(x\ast^j z)\ast^j(y\uast^i z).
        \end{gather*}
        Here, for $i>0$ we assume $\uast^i=\oast^i=\ast^i$. 
\end{enumerate}
\end{definition}

Given a diagram $D\in \mathcal D_k(S)$ and a $k$-flat biquandle $X$, consider the set of colourings $Col_X(D)$ of the semiarcs of $D$ by elements of $X$ such that at each crossing the compatibility conditions in Fig.~\ref{fig:biquandle_operations} hold.

\begin{figure}
\centering\includegraphics[width=0.5\textwidth]{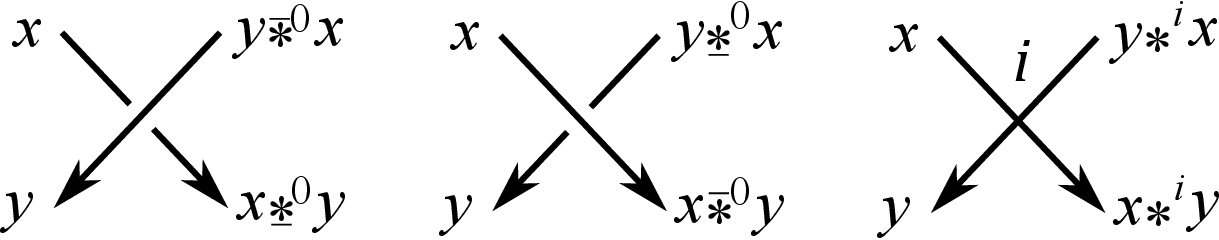}
\caption{Biquandle coloring}\label{fig:biquandle_operations}
\end{figure}

\begin{proposition}
    Let  $D$ and $D'$ be diagrams of one multi-flat link $L\in\mathcal L_{\mathbf d, \mathbf\epsilon}(S)$. Then there is a bijection between the sets of biquandle colourings $Col_X(D)$ and $Col_X(D')$.

    In particular, the number of biquandle colourings is an invariant of multi-flat links.
\end{proposition}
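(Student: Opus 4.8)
The plan is to reduce the statement to a move-by-move verification. Since $D$ and $D'$ are diagrams of the same link, there is a finite sequence of ambient isotopies of $S$ and of the Reidemeister moves admitted by $\mathcal L_{\mathbf d,\mathbf\epsilon}(S)$ carrying $D$ to $D'$. An isotopy gives an obvious bijection on colourings, so it suffices to exhibit, for each elementary move taking a diagram $D_1$ to a diagram $D_2$ and supported in a disc $B\subset S$, a bijection $Col_X(D_1)\to Col_X(D_2)$ that fixes the colours of every semiarc disjoint from $B$; composing these along the sequence yields the bijection $Col_X(D)\to Col_X(D')$, and invariance of $|Col_X(D)|$ follows at once. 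To produce such a bijection for a single move it is in turn enough to show that any colouring of the finitely many semiarcs crossing $\partial B$ extends in exactly one way to the part of $D_1$ inside $B$ and in exactly one way to the part of $D_2$ inside $B$, and that the outgoing boundary colours computed on the two sides coincide.

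Second and third Reidemeister moves are the routine part. For $\Omega_2(i)$, on the bigon side the colours of the two internal semiarcs and of the outgoing ends are obtained by applying at the first crossing the maps $\cdot\oast^0 y$, $\cdot\uast^0 y$ and $S_0$ (when $i=0$), or $\cdot\ast^i y$ and $S_i$ (when $i\ge1$), and at the second crossing their inverses; bijectivity of these maps (axioms 1(b) and 2(a)) gives existence and uniqueness of the internal colouring and shows the outgoing ends carry the prescribed colours, while on the trivial side there is nothing internal to colour. For a third Reidemeister move the three strands entering $B$ carry colours $x,y,z$, and pushing these through the three crossings on either side of the triangle produces two colourings of the outgoing ends; their equality is exactly the content of the three self-distributive relations of axiom 1(c) for $\Omega_3(0,0)$, of the single relation of axiom 2(b) for $\Omega_3(i,i)$ (which is needed precisely because this move is allowed only when $\epsilon_i=1$), and of the three mixed relations of axiom 3 for $\Omega_3(i,j)$ with $i<j$ (with the convention $\uast^i=\oast^i=\ast^i$ for $i>0$). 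Thus each $\Omega_2$ and $\Omega_3$ move induces the required boundary-fixing bijection.

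The first Reidemeister moves $\Omega_1^{d_i}(i)$ are the crux, and I expect this to be the only genuinely delicate point. For a single kink of type $i$ one must check that the compatibility condition at the self-crossing determines the colour of the loop semiarc uniquely in terms of the colour $a$ entering the kink, and that the colour leaving the kink equals $a$ — that is, that the ``kink permutation'' attached to a type-$i$ crossing is the identity. For classical crossings this is the standard biquandle computation, in which the idempotency relation $x\oast^0 x=x\uast^0 x$ together with invertibility of $S_0$ forces the loop colour and yields the identity kink permutation in every admissible under/over variant of the move; for flat crossings ($i\ge1$) one argues analogously, the relevant input being the bijectivity statements of axiom 2(a). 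Granting that a single kink is trivial on colourings, the move $\Omega_1^{d_i}(i)$, whose interior consists of $d_i$ self-crossings, follows by iterating the one-kink analysis: the internal colouring is constructed and pinned down one crossing at a time, and the net effect on the incoming colour is the $d_i$-fold iterate of the (identity) kink permutation. The main work is therefore to isolate the correct solvability/triviality statement for the self-crossing constraint, uniformly in the crossing type and in $d_i$, and to match the orientation conventions with the figure defining $\Omega_1^{d_i}(i)$; once that is done, the rest is the bookkeeping of the two preceding paragraphs.
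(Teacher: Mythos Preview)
Your overall strategy---reduce to a single move supported in a disc and verify that boundary colours extend uniquely and compatibly on both sides---is exactly what the paper does (the paper's proof is the one-line ``standard check'' with only $\Omega_3(i,j)$ spelled out). Your handling of $\Omega_2$ and $\Omega_3$ is correct and matches the axioms to the moves in the right way.

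The genuine gap is in the flat first Reidemeister case. You claim that for $i\ge 1$ ``one argues analogously, the relevant input being the bijectivity statements of axiom 2(a)'', and then that the single flat kink permutation is the identity. But axiom 2(a) gives only that $\cdot\ast^i y$ and $S_i$ are bijections; there is no flat analogue of the diagonal axiom 1(a). At a flat kink the constraints read (up to conventions) $b=a\ast^i b$ and $c=b\ast^i a$, and nothing in axiom 2 forces a unique such $b$ to exist, nor $c=a$. The paper's own Alexander example $x\ast^i y=s_i x$ already fails: the kink sends $a\mapsto s_i^2 a$, and for the unknot with one flat $i$-kink one gets the extra relation $(s_i^2-1)a=0$, so over $X=\mathbb Z/n$ the colouring count drops from $n$ to $\gcd(s_i^2-1,n)$. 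Thus ``granting that a single kink is trivial on colourings'' is precisely what cannot be granted from the stated axioms. A secondary issue: your reduction of $\Omega_1^{d_i}(i)$ to $d_i$ independent single kinks presupposes that the single-kink move is itself available, which it need not be when $d_i>1$; the move must be analysed as a whole. (In fairness, this lacuna is in the paper's definition rather than in your method: one needs either a diagonal-type axiom for each $\ast^i$, or a weaker periodicity condition tailored to $d_i$, for the proposition to hold as stated.)
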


\begin{proof}
    The proof is standard check of bijection for the Reidemeister moves. For example, in the case of $\Omega_3(i,j)$ move, the 1-to-1 correspondence between the colourings follows from the third group of relation in the definition of multiflat biquandle (Fig.~\ref{fig:biquandle_r3ij}). The other moves are checked analogously. 

\begin{figure}[h]
\centering\includegraphics[width=0.6\textwidth]{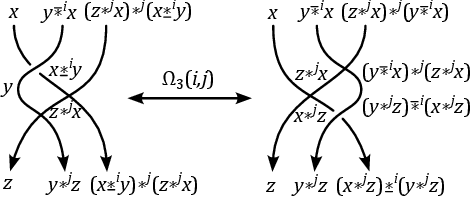}
\caption{The colourings in the case of $\Omega_3(i,j)$ move}\label{fig:biquandle_r3ij}
\end{figure}
\end{proof}

\begin{example}
    Let $X$ be a $k$-flat biquandle such that the element $x\ast^i y$, $x,y\in X$, $1\le i\le k$, depends only on $x$, i.e. $x\ast^i y=f_i(x)$ for some map $f_i\colon X\to X$. The conditions in the definition of $k$-flat biquandle mean that $(X,\uast^0,\oast^0)$ is a biquandle with $k$ commuting automorphisms $f_i$:
\[
f_i(x\uast^0 y)=f_i(x)\uast^0 f_i(y),\quad f_i(x\oast^0 y)=f_i(x)\oast^0 f_i(y),\quad
f_i\circ f_j= f_j\circ f_i
\]
for any $x,y\in X$, $1\le i,j\le k$.

For example, we can take a module $X$ over the ring $A_k=\mathbb Z[t,t^{-1},s_1,s_1^{-1},\dots, s_k,s_k^{-1}]$ with the operations
\[
x\uast^0 y=tx+(1-t)y,\quad x\oast^0 y = x,\quad x\ast^i y=f_i(x)=s_ix.  
\]
This $k$-flat biquandle is called a \emph{$k$-flat Alexander quandle}. 
\end{example}

Let $D$ is an oriented $k$-flat link diagram in $S$. The \emph{Alexander quandle} $AQ(D)$ is the module over the ring $A_k=\mathbb Z[t,t^{-1},s_1,s_1^{-1},\dots, s_k,s_k^{-1}]$ whose generators are the semiarcs of $D$ and relations are the coloring relations at the crossings (Fig.~\ref{fig:biquandle_operations}) with the operations $\uast^0, \oast^0, \ast^i$ defined in the previous example.

\begin{proposition}
 The Alexander quandle $AQ(D)$ is an invariant of multi-flat links $\mathcal L_{\mathbf d, \mathbf\epsilon}(S)$. 
\end{proposition}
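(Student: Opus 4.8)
The plan is to identify $AQ(D)$ with the universal $A_k$-module carrying a colouring of $D$, and then to upgrade the invariance of colouring \emph{sets}, already available for $k$-flat biquandles, to an invariance of the \emph{module} $AQ(D)$ via the Yoneda lemma.

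First I would record what $AQ(D)$ is as a module: by definition it is presented with the semiarcs of $D$ as generators and the crossing relations of Fig.~\ref{fig:biquandle_operations} as relations, evaluated with the operations $x\uast^0 y=tx+(1-t)y$, $x\oast^0 y=x$, $x\ast^i y=s_i x$ of the $k$-flat Alexander quandle. Hence for every $A_k$-module $M$ there is a tautological identification, natural in $M$,
\[
\mathrm{Hom}_{A_k}(AQ(D),M)\;\cong\;Col_M(D),
\]
where the right-hand side is the set of assignments of elements of $M$ to the semiarcs of $D$ satisfying those relations. By the previous Example, $M$ together with $\uast^0,\oast^0,\ast^1,\dots,\ast^k$ is a $k$-flat biquandle (a biquandle with $k$ commuting automorphisms $f_i=s_i\cdot\mathrm{id}$), so $Col_M(D)$ is an instance of the colouring set occurring in the Proposition above on the invariance of the number of biquandle colourings.

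Next, for a second diagram $D'$ of the same link $L\in\mathcal L_{\mathbf d,\mathbf\epsilon}(S)$, that Proposition supplies a bijection $Col_M(D)\to Col_M(D')$; I would then check that this bijection is natural in $M$. This is where the linear structure is used: the bijection is built up, along a chain of multi-flat Reidemeister moves and surface isotopies from $D$ to $D'$, purely from the local colouring rules at crossings, and for the Alexander quandle these rules and their inverses are $A_k$-module maps (the maps $x\mapsto tx+(1-t)y$, $x\mapsto x$, $x\mapsto s_i x$ are invertible because $t$ and the $s_i$ are units of $A_k$). Consequently any morphism $M\to M'$ of $A_k$-modules intertwines the two colouring bijections, so $\mathrm{Hom}_{A_k}(AQ(D),-)\cong\mathrm{Hom}_{A_k}(AQ(D'),-)$ as functors on $A_k$-modules, and Yoneda yields $AQ(D)\cong AQ(D')$.

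Concretely, naturality amounts to checking that the presentation of $AQ(D)$ can be carried to that of $AQ(D')$ by Tietze transformations that only ever introduce or delete generators expressible through ring operations and units of $A_k$. For $\Omega_2(i)$ one solves for and deletes the two new semiarcs using the invertible maps above; for $\Omega_3(i,j)$ the boundary semiarcs are untouched and the two relation systems agree by the third family of $k$-flat biquandle axioms, which the Example verifies for the Alexander quandle. I expect the move $\Omega_1^{d_i}(i)$ to be the main obstacle: one must show that the semiarcs created inside the curl are all eliminable and that this forces no extra relation among the surviving semiarcs, i.e.\ that the curl is transparent to the module presentation for the prescribed form of the move; once this is settled, the remaining moves and the ambient isotopies are routine.
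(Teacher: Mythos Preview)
The paper gives no proof of this proposition; it is stated immediately after the general biquandle--colouring proposition and is evidently meant to be read as a direct specialisation of that result to the $k$-flat Alexander quandle. Your Yoneda argument is a correct and clean way to make that specialisation precise: the identification $\mathrm{Hom}_{A_k}(AQ(D),M)\cong Col_M(D)$ holds by the very definition of $AQ(D)$ as a module presented by semiarcs and crossing relations, and the move-by-move bijections of colourings are indeed given by $A_k$-linear formulas (since $t$ and the $s_i$ are units), so they assemble into a natural isomorphism of $\mathrm{Hom}$-functors and hence an isomorphism $AQ(D)\cong AQ(D')$. An equivalent and slightly more concrete phrasing, avoiding Yoneda, is that each multi-flat move corresponds to a Tietze transformation of the $A_k$-module presentation, which is exactly the ``concrete'' version you sketch at the end.

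Your hesitation about $\Omega_1^{d_i}(i)$ is the one place where your write-up is not yet a proof. Note that the paper's definition of a $k$-flat biquandle imposes \emph{no} axiom tailored to $\Omega_1^{d_i}(i)$ for $i>0$; invariance of colourings under this move is not a formal consequence of the listed axioms but rather of the specific colouring convention at flat crossings encoded in Fig.~\ref{fig:biquandle_operations} (which distinguishes the two strands by the ambient orientation). Once you unpack that convention, the curl of $d_i$ type-$i$ crossings contributes relations that eliminate the interior semiarcs and leave the two boundary semiarcs equal, exactly as in the classical case; so the ``extra relation'' you worry about does not appear. Since the paper's preceding proposition already asserts the colouring bijection for all the moves of $\mathcal L_{\mathbf d,\mathbf\epsilon}(S)$, you may also simply cite it and verify naturality, as you do for the other moves.
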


As a consequence, the generator $\Delta(D)\in A_k$ of the Fitting ideal~\cite{L} $Fitt_0(AQ(D))$ is an invariant of multi-flat links (defined up to invertible elements in the ring $A_k$) which is called the \emph{Alexander polynomial} of the multi-flat link.

\begin{example}
    Consider the $2$-flat eight knot $K$ (Fig.~\ref{fig:inclusion_map} left). The Alexander quandle $AQ(K)$ is generated over $A_2$ by two generators $a,b$ corresponding to the outgoing undercrossing arcs (Fig.~\ref{fig:eight_knot_mf_aq}), and two relations corresponding to the crossings.
\begin{figure}[h]
\centering\includegraphics[width=0.15\textwidth]{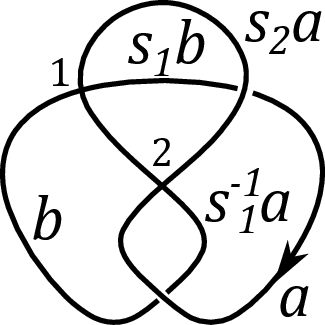}
\caption{The $2$-flat eight knot $K$}\label{fig:eight_knot_mf_aq}
\end{figure}
The relation matrix of $AQ(K)$ is
\[
M=\left(\begin{array}{cc}
    ts_1^{-1}+(1-t) & -1 \\
    t+(1-t)s_2 & -s_1
\end{array}\right).
\]
Then the Alexander polynomial of the $2$-flat eight knot is $\Delta(K)=det(M)=(s_2-s_1)(1-t)$. 
\end{example}

\subsection{Virtual multi-flat links}

\begin{definition}
    Let $S$ be an oriented connected compact surface. Let $k\in\mathbb Z_{\ge 0}$. A \emph{virtual $k$-flat link diagram} is a $4$-valent graph $D$ embedded in $S$ whose vertices are either classical or flat of type $i$, $1\le i\le k$, or virtual. Virtual crossings are drawn circled (Fig.~\ref{fig:detour_move}). Denote the set of virtual $k$-flat diagrams in $S$ by $\mathcal {VD}_k(S)$.

    Fix tuples $\mathbf d=(d_0,d_1,\dots,d_k)\in\mathbb Z_{\ge 0}^{k+1}$ and $\mathbf\epsilon=(\epsilon_0,\epsilon_1,\dots,\epsilon_k)\in\{0,1\}^{k+1}$. Define the set of \emph{virtual multi-flat links of type $(\mathbf d, \mathbf\epsilon)$} as the set $\mathcal {VL}_{\mathbf d, \mathbf\epsilon}(S)$ of equivalence classes of virtual $k$-flat diagrams in the surface $S$ modulo moves of multi-flat links of type $(\mathbf d, \mathbf\epsilon)$ and detour moves for virtual crossings (Fig.~\ref{fig:detour_move}).

\begin{figure}[h]
\centering\includegraphics[width=0.5\textwidth]{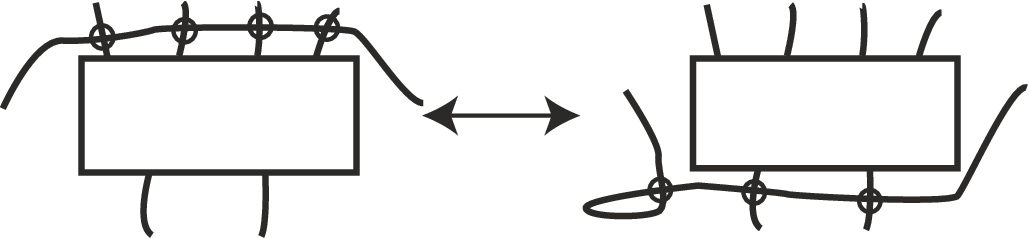}
\caption{Detour move}\label{fig:detour_move}
\end{figure}
\end{definition}

Consider by the following map $\nu\colon\mathcal D_{k+1}(S^2)\to \mathcal {VD}_k(S)$. For a $(k+1)$-flat diagram $D$ in the sphere, for any $i$-crossing of $D$, $0\le i\le k$, put a corresponding $i$-crossing somewhere in the surface $S$. Connect arbitrarily the ends of the crossings in $S$ in the same way as they are connected in the diagram $D$. Mark the intersection points of the connecting arcs as virtual crossings. The obtained virtual $k$-flat diagram $\nu(D)$ in $S$ is uniquely defined up to isotopy and detour moves.

\begin{proposition}
    Let $\mathbf d'=(d_0,d_1,\dots,d_k,1)$ and $\mathbf\epsilon'=(\epsilon_0,\epsilon_1,\dots,\epsilon_k,1)$. The map $\nu$ induces a well-defined bijection 
$\mathcal {VL}_{\mathbf d, \mathbf\epsilon}(S)\simeq \mathcal {L}_{\mathbf d', \mathbf\epsilon'}(S^2)$.
\end{proposition}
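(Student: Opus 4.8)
The plan is to produce an explicit two‑sided inverse of the map $\bar\nu$ induced by $\nu$ and to check that both composites are the identity; essentially all the content is the standard principle of virtual knot theory that a diagram whose extra crossings serve only to \emph{route} arcs is determined, up to the moves allowed for those crossings, by its underlying combinatorial data. Here the routing crossings are the virtual crossings of $E\in\mathcal{VD}_k(S)$ on one side, and the type‑$(k+1)$ crossings of $D\in\mathcal D_{k+1}(S^2)$ on the other; for the latter exactly the moves $\Omega_1^1(k+1)$, $\Omega_2(k+1)$, $\Omega_3(k+1,k+1)$ and all mixed moves $\Omega_3(i,k+1)$, $0\le i\le k$, are available, because $d'_{k+1}=1$ and $\epsilon'_{k+1}=1$ — the precise analogue of the virtual and semivirtual Reidemeister moves. (For $k=0$ this is the classical statement that virtual links drawn on any surface, with virtual crossings and the detour move, are the same as virtual links, consistent with $\mathcal L_1(S^2)$ being the virtual links.)

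To set this up, associate to a virtual $k$‑flat diagram $E$ in $S$ its \emph{$k$‑flat Gauss data} $\gamma(E)$: the abstract decorated $4$‑valent graph whose vertices are the non‑virtual crossings of $E$ (each keeping its type $0,\dots,k$, its orientation, and the over/under datum if of type $0$) and whose edges are the arcs of $E$ joining these crossings through virtual crossings; $\gamma(E)$ carries no information about $S$. Similarly, for $D\in\mathcal D_{k+1}(S^2)$ let $\gamma'(D)$ be the same decorated graph built from the crossings of types $0,\dots,k$ with the type‑$(k+1)$ crossings treated as transparent; by the construction of $\nu$ one has $\gamma(\nu(D))=\gamma'(D)$, and both $\gamma$ and $\gamma'$ take values in the same set $\mathcal G_k$ of abstract data. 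I will invoke two facts, each the multi‑flat version of a classical theorem on virtual/abstract link diagrams, proved by the same argument because the structure of a strand past which a routing crossing is pushed is irrelevant to the routing: (i) two virtual $k$‑flat diagrams in $S$ have equal Gauss data iff they are related by isotopies of $S$ and detour moves, and every element of $\mathcal G_k$ is realized, in fact by a diagram inside a disk $B\subset S$ (slide all non‑virtual crossings into $B$, then use detour moves to pull each connecting arc into $B$); (ii) two diagrams in $\mathcal D_{k+1}(S^2)$ have equal $\gamma'$ iff they are related by planar isotopies and the moves $\Omega_1^1(k+1)$, $\Omega_2(k+1)$, $\Omega_3(k+1,k+1)$, $\Omega_3(i,k+1)$ ($0\le i\le k$) — all of which are moves of $\mathcal L_{\mathbf d',\mathbf\epsilon'}(S^2)$ — and every element of $\mathcal G_k$ is realized in $\mathcal D_{k+1}(S^2)$ (place the vertices in a disk, join the ends generically, marking new double points as type‑$(k+1)$ crossings).

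Granting (i)–(ii), define $\omega\colon\mathcal{VD}_k(S)\to\mathcal D_{k+1}(S^2)$ by realizing $\gamma(E)$ in $S^2$ as in (ii); by (ii) this descends to a well‑defined map into $\mathcal L_{\mathbf d',\mathbf\epsilon'}(S^2)$ and satisfies $\gamma'(\omega(E))=\gamma(E)$. Now the four checks. \emph{$\bar\nu$ is well defined:} an elementary equivalence in $\mathcal L_{\mathbf d',\mathbf\epsilon'}(S^2)$ between $D_1$ and $D_2$ is either a planar isotopy, a move touching only type‑$(k+1)$ crossings, or a mixed $\Omega_3(i,k+1)$ — then $\gamma'(D_1)=\gamma'(D_2)$, hence $\gamma(\nu(D_1))=\gamma(\nu(D_2))$ and $\nu(D_1)\sim\nu(D_2)$ by (i) — or a move on types $0,\dots,k$ supported in a disk, whose parameters are the $(\mathbf d,\mathbf\epsilon)$‑part of $(\mathbf d',\mathbf\epsilon')$, hence a legal move in $\mathcal{VL}_{\mathbf d,\mathbf\epsilon}(S)$, so after detour moves bring the relevant piece of $\nu(D_1)$ into a disk of $S$ it relates $\nu(D_1)$ to $\nu(D_2)$. \emph{$\bar\omega$ is well defined:} symmetric, reading moves of $\mathcal{VL}_{\mathbf d,\mathbf\epsilon}(S)$ and transporting them to $S^2$ via (ii). \emph{$\bar\nu\bar\omega=\mathrm{id}$:} $\gamma(\nu(\omega(E)))=\gamma'(\omega(E))=\gamma(E)$, so $\nu(\omega(E))\sim E$ by (i). \emph{$\bar\omega\bar\nu=\mathrm{id}$:} $\gamma'(\omega(\nu(D)))=\gamma(\nu(D))=\gamma'(D)$, so $\omega(\nu(D))$ and $D$ are equal in $\mathcal L_{\mathbf d',\mathbf\epsilon'}(S^2)$ by (ii). Therefore $\bar\nu$ is a bijection.

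The main obstacle is (i)–(ii): making rigorous the dictionary between virtual $k$‑flat links in $S$, $(k+1)$‑flat links in $S^2$ with ``virtual'' top type, and abstract Gauss data — the multi‑flat generalization of the classical correspondence between virtual links, abstract link diagrams and Gauss data. The one genuinely new point to watch is that one needs the mixed move $\Omega_3(i,k+1)$ for \emph{every} $i\in\{0,\dots,k\}$, flat types included, in order to slide an arbitrary strand past a type‑$(k+1)$ crossing; but the classical argument is insensitive to whether that strand carries over/under or flat information, so it applies verbatim, as does the ``slide into a disk'' reduction of (i), which is precisely where the surface $S$ is forgotten. A minor additional point is to record that $\Omega_1^1(i)$ is the ordinary one‑kink Reidemeister move, so that a type‑$(k+1)$ curl can be freely created or destroyed, matching the virtual first Reidemeister move.
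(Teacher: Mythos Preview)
The paper states this proposition without proof, so there is no argument to compare against; your proof via abstract Gauss data and the explicit inverse $\omega$ is the natural and correct one. Note that the paper already implicitly invokes your fact~(i) when it asserts that ``the obtained virtual $k$-flat diagram $\nu(D)$ in $S$ is uniquely defined up to isotopy and detour moves,'' and your identification of the type-$(k+1)$ moves (with $d'_{k+1}=1$, $\epsilon'_{k+1}=1$) as precisely the virtual and semivirtual Reidemeister moves is exactly the mechanism the authors intend.
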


As a consequence, the theory of virtual multi-flat links does not depend on the surface $S$. 

\subsection{Alexander's and Markov's theorems}

Recall that a generalized knot theory~\cite{BF1} is \emph{regular} if it allows second Reidemeister move for each type of crossing; and it is \emph{normal} if there is a crossing type $x$ such that for any crossing type $y$ the third Reidemeister move on crossings of types $x,\bar x, y$ is allowed.

By definition we have the following statement.
\begin{proposition}\label{prop:f-flat_links_regular_normal}
    For any $\mathbf d=(d_0,d_1,\dots,d_k)\in\mathbb Z_{\ge 0}^{k+1}$ and $\mathbf\epsilon=(\epsilon_0,\epsilon_1,\dots,\epsilon_k)\in\{0,1\}^{k+1}$ the multi-flat link theory $\mathcal L_{\mathbf d, \mathbf\epsilon}(S)$ is regular. If $\epsilon_k=1$ then $\mathcal L_{\mathbf d, \mathbf\epsilon}(S)$ is normal.
\end{proposition}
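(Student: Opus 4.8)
The plan is to unwind the definitions of \emph{regular} and \emph{normal} for generalized knot theories and match them, item by item, against the four families of moves defining $\mathcal L_{\mathbf d,\mathbf\epsilon}(S)$. Recall from the Remark that the type set here is $T=\{0_+,0_-,1,\dots,k\}$ with involution $\bar 0_\pm=0_\mp$ and $\bar i=i$ for $1\le i\le k$; in Fenn's bookkeeping a ``second Reidemeister move on a crossing of type $t$'' is one cancelling a pair of crossings of types $t,\bar t$, and the ``third Reidemeister move on types $t_1,t_2,t_3$'' is one whose three crossings carry those types.

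For regularity I would simply observe that item~2 of the definition supplies $\Omega_2(i)$ for every $i$ with $0\le i\le k$, with no constraint on $\mathbf d$ or $\mathbf\epsilon$. The move $\Omega_2(0)$ cancels a classical crossing against its mirror image, i.e.\ a pair of types $0_+$ and $0_-=\overline{0_+}$; for $1\le i\le k$ the move $\Omega_2(i)$ cancels a pair of $i$-crossings and $\bar i=i$. Hence every type in $T$ admits its second Reidemeister move, so the theory is regular --- nothing happens here beyond reading the definition.

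For normality, assume $\epsilon_k=1$ and take the distinguished type to be $x=k$, the top flat type (when $k=0$, take $x=0_+$ instead, in which case the computation below degenerates to the purely classical situation). Since $\bar k=k$, I must exhibit, for every $y\in T$, a third Reidemeister move whose crossings carry the types $k,k,y$. If $y$ has index $j<k$ --- in particular if $y=0_\pm$ --- this is a mixed move with two $k$-crossings and one $j$-crossing, i.e.\ $\Omega_3(j,k)$, which is allowed by item~4 for every $0\le j<k$; if $y=k$ it is $\Omega_3(k,k)$, allowed by item~3 precisely because $\epsilon_k=1$. Thus $x=k$ witnesses normality, and this is the only place the hypothesis $\epsilon_k=1$ enters.

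The one point that requires care rather than mere inspection --- the expected ``main obstacle'', such as it is --- is the dictionary between Fenn's abstract moves and the drawn moves $\Omega_2(i),\Omega_3(i,j)$ in the presence of the splitting $0\rightsquigarrow 0_\pm$: one must confirm that $\Omega_2(0)$ is genuinely the second Reidemeister move on the type $0$ in Fenn's sense (a crossing paired with its involution image), and that the classical and mixed third Reidemeister moves of items~3--4 realize every under/overcrossing variant of the participating $0$-crossings, so that the normality condition --- which quantifies over $y\in\{0_+,0_-,1,\dots,k\}$ separately --- is met for both $y=0_+$ and $y=0_-$. Both follow from the convention (stated explicitly for $\Omega_1^{d}(0)$ and understood analogously for the higher moves) that the $0$-labelled moves range over all admissible crossing structures, together with regularity, which lets one pass between the remaining sign variants of a third Reidemeister move by inserting and deleting $\Omega_2(0)$ pairs. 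Granting this, no calculation remains and the proposition is immediate from the four-item list.
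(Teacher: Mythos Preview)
Your argument is correct and is precisely the ``by definition'' verification the paper alludes to; the paper itself gives no proof beyond that phrase, so your unwinding of regularity via the $\Omega_2(i)$ and of normality via $x=k$ and the moves $\Omega_3(j,k)$, $\Omega_3(k,k)$ is exactly what is intended. Your extra paragraph on the $0_+/0_-$ bookkeeping is a reasonable sanity check but not a separate idea.
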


A plane link diagram $D$  is called \emph{braided} if there exists a polar coordinate system in the plane such that the angular coordinate changes monotone on each component of $D$. One can consider a braided diagram as a closure of some braid.

\begin{theorem*}[Alexander's theorem \cite{BF1}]
In a regular knot theory, any diagram can be braided.
\end{theorem*}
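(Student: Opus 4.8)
The plan is to run the classical braiding algorithm of Alexander, in the form later sharpened by Yamada and Vogel, and to check that in a regular theory every move it uses is among those allowed; this is, in essence, the proof given in \cite{BF1}. First I would pick any point $O$ in the plane that does not lie on $D$ and introduce polar coordinates $(r,\varphi)$ centred at $O$. After a small planar isotopy one may assume $\varphi|_D$ has only finitely many critical points, all non-degenerate; subdividing the edges of $D$ at the critical points, at the crossings, and a little further, one arranges that each resulting \emph{short arc} is monotone in $\varphi$, is contained in a crossing-free disc, and spans an angular interval that avoids the angular coordinates of all crossings of $D$. Call such an arc \emph{good} if $\varphi$ increases along it in the direction of the orientation, and \emph{bad} otherwise. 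By the definition of a braided diagram, $D$ is braided with respect to $O$ precisely when it has no bad arc, so it suffices to produce, using only moves of the theory, a finite sequence of equivalences after which no bad arc remains.

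Next I would describe the braiding move. Given a bad short arc $\gamma$ with endpoints $P$ and $Q$, replace it by an embedded arc $\gamma'$ from $P$ to $Q$ along which $\varphi$ is strictly increasing, routed so as to hug a circle about $O$ whose radius is smaller than the radial coordinate of every crossing of $D$, to wind once around $O$, and then to return to $P$ and $Q$ along two short radial ramps placed at angles just outside the angular span of $\gamma$. Because that span, and hence the span of the ramps, avoids every crossing angle, each crossing of $D$ has its angle in the angular range where $\gamma'$ hugs the small circle, and therefore lies outside the disc bounded by $\gamma\cup\gamma'$. Consequently the isotopy carrying $\gamma$ to $\gamma'$ sweeps only over arcs of $D$ and never over a crossing, so on the level of diagrams it is a finite composition of planar isotopies and second Reidemeister moves — neither a first nor a third Reidemeister move is ever invoked — and each such $\Omega_2$ may be taken to create a cancelling pair of crossings of any prescribed type $x$ and $\bar x$. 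As a regular theory provides $\Omega_2$ for every crossing type, the move is legal, and it transforms $D$ into an equivalent diagram in which $\gamma$ has become good. This is exactly the point at which the hypothesis ``regular'', and nothing stronger, is used.

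The substantive difficulty is termination. The braiding move makes the chosen bad arc good, but the new arc $\gamma'$ may cross other bad arcs and cut each of them into two bad arcs, so the crude count of bad arcs need not decrease. To get around this I would replace that count by a Vogel-type complexity. Resolve every crossing of $D$ compatibly with the orientation — a well-defined operation for each crossing type, hence legitimate in a generalized theory — to obtain the system of Seifert circles, and recall that, up to planar isotopy, $D$ is braided exactly when its Seifert circles are coherently nested. If they are not, then some face of the resolved picture is incident to two distinct Seifert circles; performing the braiding move across that face merges the two circles, and one verifies that a lexicographically ordered complexity built from the number of Seifert circles together with the number of incoherently nested pairs of circles strictly decreases at each such step. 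Hence after finitely many moves the Seifert circles are coherently nested, $D$ has been turned into a braided diagram, and a braided diagram is by construction the closure of a braid. The two points that need real care are the choice of this complexity, so that it provably drops, and the check that the additional data of a generalized theory — the set of crossing types and its involution — is entirely inert in the Seifert-circle bookkeeping.
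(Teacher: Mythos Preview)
The paper does not prove this statement; it is quoted from \cite{BF1} and used as a black box, so there is no in-paper argument to compare your sketch against. Your outline does correctly isolate the one place the hypothesis enters: the braiding procedure uses only second Reidemeister moves, and ``regular'' means precisely that $\Omega_2$ is available for every crossing type. The geometry of the swing in your second paragraph, and the check that the swept disc contains no crossing of $D$ so that no $\Omega_3$ is needed, are both sound.

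Your termination paragraph, however, splices two distinct algorithms. The move you set up carefully is Alexander's global swing of a bad arc around $O$; the complexity you then reach for is Vogel's, and that complexity governs a \emph{different} move --- a single local $\Omega_2$ performed across a defect face, not a swing around the centre. A few details are also off: Vogel's $\Omega_2$ does not reduce the number of Seifert circles (the two incident circles merge, but a new small circle is born in the bigon, so the count is preserved --- what strictly drops is the number of non-nested pairs), and the face on which one acts must have its two bounding arcs oriented compatibly with the face boundary, not merely lie on distinct circles. If you want Vogel's complexity to drive termination, drop the swing and run Vogel's local $\Omega_2$ throughout; if you prefer the swing, supply its own termination argument. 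Either route uses only $\Omega_2$ and therefore goes through in a regular theory, but as written the move you describe and the complexity you invoke do not match.
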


\begin{theorem*}[Markov's theorem \cite{BF1}]
Suppose diagrams $D_1$ and $D_2$ are braided and define the same link in a
normal theory. Then they are related by a sequence of Reidemeister moves in which all the intermediate diagrams are braided.
\end{theorem*}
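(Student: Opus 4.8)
The plan is to adapt the classical ``braid an isotopy'' argument (Morton, Traczyk) to a normal generalized knot theory, taking Alexander's theorem above as a black box. Recall that the braiding procedure is carried out with respect to a fixed braid axis: a point $O$ in the plane together with a ray from it. One cuts a diagram into arcs along the ray, calls each arc \emph{positive} or \emph{negative} according to the sense in which it runs around $O$, and then ``throws'' each negative arc across $O$, replacing it by an arc that runs the correct way and passes entirely over everything lying between it and the axis. Each throw is realized by Reidemeister moves: the arc is dragged past ordinary strands by second Reidemeister moves and past existing crossings by third Reidemeister moves. One arranges the throws so that all newly created crossings are of the distinguished type $x$ supplied by normality (possible since the theory is regular, so $\Omega_2(x)$ is available to clean up); then every third Reidemeister move used to drag a thrown arc past a crossing of some type $y$ is a move on the triple $(x,\bar x, y)$, and is therefore permitted.

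First I would establish a local statement: if $D$ and $D'$ differ by a single Reidemeister move or a local planar isotopy supported in a disk $\Delta$, then $D$ and $D'$ admit braidings $\widehat D$, $\widehat D'$ joined by a sequence of Reidemeister moves through braided diagrams. The idea is to first push, by a preliminary isotopy, all strands near $\Delta$ into parallel position running the correct way around $O$, so that $\Delta$ lies inside a region that is already braided; the preliminary isotopy is a composition of smaller moves, so one argues inductively and reduces to the case where it is trivial. In that case the braiding procedures for $D$ and for $D'$ can be run identically outside $\Delta$, while inside $\Delta$ the move takes place between monotone tangles and is already a braided move.

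Next I would establish an ambiguity statement: any two braidings produced from the same diagram $D$ — differing in the choice of ray, of which negative arcs are thrown, of the order of the throws, and of the direction of each throw — are joined by Reidemeister moves through braided diagrams. Reversing an over-throw to an under-throw or transposing two throws changes the resulting braid by a conjugation or a (de)stabilization; a conjugation of a closed braid is realized by sliding a strand around the closure arc (second Reidemeister moves through braided diagrams), a (de)stabilization by a single first Reidemeister kink, and a change of ray is a cyclic rotation, again a conjugation. Combining these elementary cases covers the general ambiguity.

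The theorem then follows by assembly: given braided $D_1$, $D_2$ equivalent in a normal theory, fix a chain of Reidemeister moves $D_1 = F_0 \to F_1 \to \cdots \to F_m = D_2$; braid each $F_j$ to a braided diagram $\widehat F_j$; use the local statement to join $\widehat F_j$ to $\widehat F_{j+1}$ through braided diagrams and the ambiguity statement to join $\widehat F_0$ to $D_1$ and $\widehat F_m$ to $D_2$ (these two being braided, they are themselves outputs of the procedure); concatenate. The step I expect to be the main obstacle is the local statement — synchronizing the two braiding procedures across the disk $\Delta$ while keeping every intermediate diagram braided, and tracking the types of all crossings that are created so that only moves legitimate in a regular, normal theory are ever invoked. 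This bookkeeping is the technical core of the proof in \cite{BF1}.
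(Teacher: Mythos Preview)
The paper does not prove this theorem; it is quoted verbatim from~\cite{BF1} and used as a black box to deduce the corollary about multi-flat links. There is therefore no in-paper proof to compare your proposal against.

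Your sketch is a plausible outline of the Morton--Traczyk strategy transported to a normal generalized knot theory, and it correctly identifies where the hypotheses enter: regularity to create and remove $x$-crossings via $\Omega_2(x)$, and normality so that dragging a thrown $x$-strand past an arbitrary $y$-crossing uses only permitted $\Omega_3$ moves on types $(x,\bar x,y)$. If you actually want to verify the result you would need to consult~\cite{BF1} (Bartholomew--Fenn), since the present paper does not reproduce the argument.
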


By Proposition~\ref{prop:f-flat_links_regular_normal}, we have Alexander's and Markov's theorems for multi-flat links.
\begin{corollary}
Let $\mathbf d=(d_0,d_1,\dots,d_k)\in\mathbb Z_{\ge 0}^{k+1}$ and $\mathbf\epsilon=(\epsilon_0,\epsilon_1,\dots,\epsilon_k)\in\{0,1\}^{k+1}$.
    \begin{enumerate}
        \item Any plane $k$-flat diagram can be braided in $\mathcal L_{\mathbf d, \mathbf\epsilon}(S^2)$;
        \item If $\epsilon_k=1$ then any two braided $k$-flat diagrams which are equivalent in $\mathcal L_{\mathbf d, \mathbf\epsilon}(S^2)$, are related in $\mathcal L_{\mathbf d, \mathbf\epsilon}(S^2)$ by a sequence of Reidemeister moves in which all the intermediate diagrams are braided. 
    \end{enumerate}
\end{corollary}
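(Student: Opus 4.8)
The plan is to obtain the Corollary as an immediate consequence of the Alexander and Markov theorems quoted above, once we have identified $\mathcal L_{\mathbf d,\mathbf\epsilon}(S^2)$ as a generalized knot theory in the sense of~\cite{BF1}. Since a multi-flat link in $S^2$ is represented by plane diagrams (puncture $S^2$ away from the diagram), and two such diagrams represent the same link precisely when they are connected by the moves of the theory, the planar formulations of Fenn's theorems apply verbatim.

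First I would record, following the Remark above, that $\mathcal L_{\mathbf d,\mathbf\epsilon}(S^2)$ is the generalized knot theory whose crossings are tagged by the type set $T=\{0_+,0_-,1,\dots,k\}$ with involution $\bar 0_\pm=0_\mp$, $\bar i=i$, and whose admissible Reidemeister moves are exactly $\Omega_1^{d_i}(i)$ for $0\le i\le k$, $\Omega_2(i)$ for $0\le i\le k$, the self third moves $\Omega_3(i,i)$ for those $i$ with $\epsilon_i=1$, and all mixed third moves $\Omega_3(i,j)$ with $0\le i<j\le k$. The point to note is that Fenn's braiding procedure (for Alexander's theorem) uses only second Reidemeister moves, and his stabilization argument (for Markov's theorem) only second and third moves; hence restricting the first Reidemeister moves to the $d_i$-fold versions $\Omega_1^{d_i}(i)$ --- or dropping them entirely when $d_i=0$ --- does not affect either theorem.

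Next, by Proposition~\ref{prop:f-flat_links_regular_normal} the theory $\mathcal L_{\mathbf d,\mathbf\epsilon}(S^2)$ is regular, because $\Omega_2(i)$ is allowed for every type $i$. Alexander's theorem then says that every plane $k$-flat diagram is equivalent in $\mathcal L_{\mathbf d,\mathbf\epsilon}(S^2)$ to a braided one, which is assertion (1). For assertion (2) we use the hypothesis $\epsilon_k=1$: by the same Proposition the theory is then normal, with $x=k=\bar x$ as the distinguished type (third moves on types $k,k,y$ are available for every $y$ --- they are mixed moves $\Omega_3(y,k)$ when $y<k$, and the self move $\Omega_3(k,k)$, legitimate since $\epsilon_k=1$, when $y=k$). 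Markov's theorem now yields exactly assertion (2): two braided $k$-flat diagrams equivalent in $\mathcal L_{\mathbf d,\mathbf\epsilon}(S^2)$ are connected by a sequence of Reidemeister moves of the theory in which every intermediate diagram is braided.

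I expect the only real work to be in the second paragraph: matching Fenn's definitions of a generalized knot theory and of the \emph{regular} and \emph{normal} properties to the precise list of moves defining $\mathcal L_{\mathbf d,\mathbf\epsilon}(S)$, and checking that the nonstandard first Reidemeister moves $\Omega_1^{d_i}(i)$ are irrelevant to the braiding and stabilization constructions. After that, parts (1) and (2) are direct citations of the Alexander and Markov theorems, respectively.
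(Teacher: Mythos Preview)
Your proposal is correct and follows exactly the route the paper takes: the Corollary is stated immediately after Proposition~\ref{prop:f-flat_links_regular_normal} and the quoted Alexander and Markov theorems, with no further argument beyond invoking regularity (for part~(1)) and normality when $\epsilon_k=1$ (for part~(2)). Your additional remarks about matching the move list to Fenn's framework and the irrelevance of the nonstandard moves $\Omega_1^{d_i}(i)$ are reasonable caveats but go beyond what the paper itself supplies.
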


\end{document}